\definecolor{myBlue}{HTML}{1C18E7}
\definecolor{myRed}{HTML}{E71C18}
\definecolor{myGreen}{HTML}{0F7A11}
\DeclareFontFamily{OT1}{rsfs}{}
\DeclareFontShape{OT1}{rsfs}{n}{it}{<-> rsfs10}{}
\DeclareMathAlphabet{\mathscr}{OT1}{rsfs}{n}{it}
\newtheorem{theorem}{Theorem}
\newtheorem{lemma}[theorem]{Lemma}
\newtheorem{proposition}[theorem]{Proposition}
\newtheorem{definition}[theorem]{Definition}
\theoremstyle{definition}
\newtheorem{example}[theorem]{Example}
\newtheorem{remark}[theorem]{Remark}
\newcolumntype{C}[1]{>{\centering\let\newline\\\arraybackslash\hspace{0pt}}m{#1}}
\newcommand{\diff}{\mathrm{d}}
\DeclareMathOperator{\Sym}{Sym}
\title{Homotopy Methods for Convex Optimization}
\author[1,2]{Andreas Klingler \thanks{Email: \href{mailto:andreas.klingler@univie.ac.at}{andreas.klingler@univie.ac.at}}}
\author[3]{Tim Netzer}
\affil[1]{Institute for Theoretical Physics, University of Innsbruck, Austria}
\affil[2]{Faculty of Mathematics, University of Vienna, Austria}
\affil[3]{Department of Mathematics, University of Innsbruck, Austria}
\date{\today}
\newcommand{\runtitle}{Homotopy methods for convex optimization}
\begin{document}

\maketitle

\begin{abstract}
Convex optimization encompasses a wide range of optimization problems that contain many efficiently solvable subclasses. Interior point methods are currently the state-of-the-art approach for solving such problems, particularly effective for classes like semidefinite programming, quadratic programming, and geometric programming. However, their success hinges on the construction of self-concordant barrier functions for feasible sets.

In this work, we investigate and develop a homotopy-based approach to solve convex optimization problems. While homotopy methods have been considered in optimization before, their potential for general convex programs remains underexplored. This approach gradually transforms the feasible set of a trivial optimization problem into the target one while tracking solutions by solving a differential equation, in contrast to traditional central path methods. We establish a criterion that ensures that the homotopy method correctly solves the optimization problem and prove the existence of such homotopies for several important classes, including semidefinite and hyperbolic programs. Furthermore, we demonstrate that our approach numerically outperforms state-of-the-art methods in hyperbolic programming, highlighting its practical advantages.
\end{abstract}

\setcounter{tocdepth}{2}
\tableofcontents

\section{Introduction}
Optimization problems with convex constraints and a convex objective function (aka convex optimization problems) are a class of optimization problems that find applications in quantum information theory, portfolio optimization, data fitting, and many more \cite{Bo04}. The state-of-the-art methods to solve these problems are interior-point methods, which go back to Nesterov and Nemirovski \cite{Ne94} (see also \cite{Nesterov2018, Nemirovski2004, Wright1997, Renegar2001} for detailed overviews). While these are successfully applied to many sub-classes of convex optimization problems, including \emph{semidefinite programming}, \emph{quadratically constrained quadratic programs}, or \emph{geometric programming}, they don't solve arbitrary convex optimization problems efficiently. The success of these methods heavily rely on the construction of a self-concordant barrier function.

In this paper, we investigate and develop a way to solve certain convex optimization problems using a homotopy method. We apply this method to convex optimization problems with a single convexity constraint, as well as semidefinite and hyperbolic programs. The presented method, as all homotopy methods, is based on the following general idea:
\begin{quote}
    \emph{Homotopically change a problem with obvious solutions into the target problem and follow the path of solutions along the homotopy.}
\end{quote}

More precisely, we start from a convex optimization problem with an obvious solution (i.e.\ analytically solvable) and homotopically change it to the target problem (see \cref{fig:introduction}).  In contrast to some other homotopy methods, we propose to change the feasible set of the optimization problem, instead of changing the objective function. We then show that the path of optimal solutions is the unique solution of a system of differential equations, and we numerically solve this system to obtain a solution of the optimization problem.
This approach differs fundamentally from traditional central path methods, where solutions are obtained by solving a sequence of optimization problems at each step. Instead, we derive the entire solution path as the unique trajectory of a differential equation, providing a continuous and systematic way to reach the optimal solution without requiring iterative optimization at each stage.

\begin{figure}[h]
\centering
    \begin{tikzpicture}[scale = 0.7]

\draw (1.5, 1.5) [fill=myRed!40!white] circle (0.8cm);

\draw[myBlue] (0.94,5) -- (2.94,0);

\node[myBlue] at (1.5, 5.2) {$\mathbf{b}$};

\draw[-stealth, myBlue] (1.12,4.5) -- (1.62,4.7);




\draw[gray!70!black] (2.22,1.82) to[out=25, in=250] (2.82,2.52) to[out=70, in=200] (3.12,3.25);

\draw[dashed,draw=gray!70!white] (0,0) -- (3,1) to[out=30, in=50] (2,4) -- (0,2) -- (0,0);

\draw[fill=black] (2.22,1.82) circle (0.8mm);

\node at (2.8, 1.6) {$\mathbf{a}_0$};

\draw[-stealth, thick] (4,2) -- (5.5,2);

\begin{scope}[xshift=6cm]

\node[myBlue] at (2.38, 5.2) {$\mathbf{b}$};

\draw[-stealth, myBlue] (2.04,4.5) -- (2.52,4.7);


\draw[fill=myRed!40!white] (0.5,0.5) to[out=-5,in=210] (2.5,1) to[out=30, in=-30] (2.4,3) to[out=150,in=70] (0.4,1.8) to[out=250,in=175] (0.5,0.5);

\draw (1.5, 1.5) [dashed,draw=gray!70!myRed] circle (0.8cm);

\draw[gray!70!black] (2.22,1.82) to[out=25, in=250] (2.82,2.52) to[out=70, in=200] (3.12,3.25);

\draw[dashed,draw=gray!70!white] (0,0) -- (3,1) to[out=30, in=50] (2,4) -- (0,2) -- (0,0);

\draw[myBlue] (1.85,5) -- (3.85,0);
\draw[fill=black] (2.82,2.52) circle (0.8mm);


\node at (2.3, 2.5) {$\mathbf{a}_t$};

\draw[-stealth, thick] (4,2) -- (5.5,2);

\end{scope}

\begin{scope}[xshift=12cm]

\node[myBlue] at (2.98, 5.2) {$\mathbf{b}$};

\draw[-stealth, myBlue] (2.64,4.5) -- (3.12,4.7);

\draw[fill=myRed!40!white] (0.5,0.5) to[out=-5,in=210] (2.5,1) to[out=30, in=-30] (2.4,3) to[out=150,in=70] (0.4,1.8) to[out=250,in=175] (0.5,0.5);

\draw[fill=myRed!40!white] (0,0) -- (3,1) to[out=30, in=50] (2,4) -- (0,2) -- (0,0);




\draw[gray!70!black] (2.22,1.82) to[out=25, in=250] (2.82,2.52) to[out=70, in=200] (3.12,3.25);

\draw[myBlue] (2.44,5) -- (4.44,0);
\draw[fill=black] (3.12,3.25) circle (0.8mm);

\draw (1.5, 1.5) [dashed,draw=gray!70!myRed] circle (0.8cm);

\node at (3.6, 3.6) {$\mathbf{a}_1$};

\end{scope}
\end{tikzpicture}
    \caption{Visualization of the method presented in this paper. To solve the convex optimization problem on the right (i.e. maximizing $\mathbf{x} \mapsto \mathbf{b}^t \mathbf{x}$ over a convex set), one instead solves a trivial convex optimization problem (left) and tracks the path of optimal solutions via solving a differential equation.}
    \label{fig:introduction}
\end{figure}
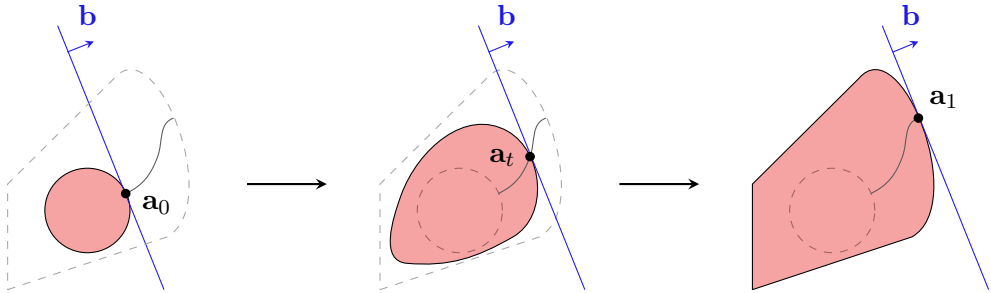

\paragraph{Related work} The homotopy method presented in this paper is deeply inspired by the principles of \emph{numerical algebraic geometry}, as developed by Sommese and Wampler \cite{So96} (see \cite{Ha17, So05} for an overview). In this field, the primary goal is to solve a system of polynomial equations,
$$p_1(\mathbf{z}) = 0, \ldots, p_k(\mathbf{z}) = 0,$$  
for multiple variables \(\mathbf{z} \in \mathbb{C}^n\) with a finite number of solutions. The approach begins by constructing an initial system of polynomial equations,
$$g_1(\mathbf{z}) = 0, \ldots, g_k(\mathbf{z}) = 0,$$
whose solutions are trivial, such as the unit polynomials $g_i(\mathbf{z}) = z_i^d$. The solutions of this system are then continuously tracked through the homotopy
$$(1-t) g_i(\mathbf{z}) + t p_i(\mathbf{z}) = 0, \quad \text{for every } i \in \{1, \ldots, k\}.$$  
In essence, the homotopy method involves gradually varying the parameter $t$ from $0$ to $1$, while following the solution path by solving an associated differential equation.

In \cite{Ha21, He21}, a homotopy approach was developed to solve semidefinite problems using the standard barrier-function relaxation. This method transforms problems lacking strict feasibility into a homotopy of problems that possess strict feasibility. However, these homotopies do not originate from a trivial initial optimization problem.

On the other hand, \cite{Na86, Na91} introduced the homotopy method for linear programs, aiming to transform a trivial problem into the target problem. In this approach, the homotopy is applied to the objective functions.

Furthermore, homotopic approach have also been applied in other contexts of linear and non-linear optimization \cite{So88, Ko91, Zh01}.

\paragraph{Main contributions} To the best of our knowledge, our approach is the first to provide a general homotopy on feasible sets of convex optimization, connecting a trivial with a very general problem, and with proven convergence under mild conditions  (see \cref{prop:uniqueness} for the main result of this paper). We prove that our approach applies successfully in the following cases:   
\begin{enumerate}[label=(\roman*)]
 \item Semidefinite and Hyperbolic programming.
    \item Optimization problems with a linear objective and one convexity constraint.
\end{enumerate}
We further benchmark the homotopy method in the following four examples:
\begin{enumerate}[label=(\alph*)]
     \item\label{ex:4} Optimizing on the hyperbolicity cone of elementary symmetric polynomials.
    \item\label{ex:1} Optimizing a linear objective function over a $k$-ellipsoid.
    \item\label{ex:2} Optimizing a linear objective function over the $p$-norm ball.
    \item\label{ex:3} Geometric programming with a single constraint.
\end{enumerate}
In particular, we show that the homotopy method outperforms the lifting techniques to semidefinite programs in \ref{ex:4}, \ref{ex:1}, and \ref{ex:2}.

\paragraph{Overview} This paper is structured as follows: In \cref{sec:diff_eq} we introduce the notion of a homotopy between optimization problems, and then show under which conditions the induced differential equation of optimal solutions has a unique solution.  In \cref{sec:examples} we apply our homotopy method to two optimization classes: hyperbolic programs (which include semidefinite programs) and convex optimization problems with a single constraint function. In \cref{sec:num_examples} we provide some numerical examples that compare our method to existing methods. In \cref{sec:outlook} we present open questions and further directions. Finally,
Appendix \ref{app:boundedness_sk} contains the proof of boundedness of rigidly convex sets defined by elementary symmetric polynomials, used in \cref{sec:num_examples}.

\section{Homotopies Between Convex Optimization Problems}\label{sec:diff_eq}

In the following, we introduce the optimization  problem we want to solve. Moreover, we introduce the notion of a homotopy between optimization problems and the ordinary differential equation for the optimal points. This section is structured as follows:
\begin{itemize}
    \item In \cref{ssec:problemStatement} we introduce the notion of a homotopy between optimization problems.
    \item In \cref{ssec:DGL_derivation} we derive the initial value problem for stationary points of the optimization problem.
    \item In \cref{ssec:uniqueness_solution} we prove that the solution of the initial value problem gives rise to the optimal value of the optimization problem. This gives rise to the main theoretical statement of this paper in \cref{prop:uniqueness}.
\end{itemize}

\subsection{Homotopies of Convex Sets and Problem Statement}\label{ssec:problemStatement}

Let $f\colon\mathbb{R}^n \to \mathbb{R}$ be a linear function, and let $C \subseteq \mathbb{R}^n$ be a compact convex set. Our objective is to find an optimal point (and thus the optimal value) of the optimization problem 
$$     \max_{\mathbf{x} \in C} \ f(\mathbf{x}).  $$
We do this by continuously deforming the feasible set from an easy one into $C$, and keeping track of an optimal solution of the corresponding optimization problems along the way. Note that this construction can be generalized  to concave functions $f$,  by introducing a homotopy of objective functions. In \cref{rem:f_homotopy} we elaborate on this more general case.

To be able to define and work with the homotopies, we will assume that the feasible sets along the homotopy (excluding the target feasible set) come with a description as follows:

\begin{definition}[Smooth description] A \emph{smooth description}
for the compact convex set  $C \subseteq \mathbb{R}^n$ is a continuously differentiable function $p\colon \mathbb{R}^n \to \mathbb{R}$ with
$$p(\mathbf x) = 0 \ \mbox{ and } \ \nabla p(\mathbf x) \neq 0 \ \mbox{ for all } \mathbf x\in\partial C.$$
\end{definition}
Capturing the boundary of $C$ is enough, since the maximum of $f$ is always attained on $\partial C$. However, note that we do not require $p(\mathbf{x}) = 0$ to imply $\mathbf{x} \in \partial C$ (see for example real zero polynomials in \cref{ssec:homotopy_RZ}). In particular, the set $C$ is in general  not uniquely determined by a smooth description, so we still have to keep track of it.
Also note that the existence of a smooth description imposes conditions on the set $C$. For example, it implies that $C$ is smooth, in the sense that each boundary point admits a unique supporting hyperplane. 

We say that a twice continuously differentiable function $p\colon \mathbb{R}^n \to \mathbb{R}$ is \emph{concave} at a point $\mathbf{x} \in \mathbb{R}^n$, if its Hessian matrix $\nabla^2 p(\mathbf{x})$ is negative semidefinite, i.e.\ all its eigenvalues are non-positive. It is \emph{strictly concave} if $\nabla^2 p(\mathbf{x})$ is negative definite, i.e.\ all eigenvalues are strictly negative.
We say that $p$ is \emph{strictly quasi-concave} at  $\mathbf{x} \in \mathbb{R}^n$, if the quadratic form $\mathbf{v} \mapsto \mathbf{v}^t \nabla^2 p(\mathbf{x}) \mathbf{v}$ is negative definite on the orthogonal complement of $\nabla p(\mathbf{x})$. In other words,
\begin{equation}\label{eq:quasi-concave}
    \mathbf{v}^t \nabla p(\mathbf{x}) = 0 \quad \Longrightarrow \quad \mathbf{v}^t \nabla^2 p(\mathbf{x}) \mathbf{v} < 0.
\end{equation}
for every $\mathbf{v} \in \mathbb{R}^n \setminus \{0\}$.\footnote{This definition is stronger than the usual definition of strict (quasi-)concavity (see for example Section 3.4.2 and Section 3.4.3 of \cite{Bo04}).  However, all (quasi-)concave functions in this paper satisfy this stronger type of (quasi-)concavity.}

If a smooth description of $C$ is strictly quasi-concave at a point $\mathbf x\in\partial C$, the point is an exposed point, i.e. the unique supporting hyperplane at $\mathbf x$ intersects $C$ only in $\mathbf x$. In particular, if this holds for all boundary points, then maximizing a linear function over $C$ has a unique solution.

Now let $\left( C_t\right)_{t\in[0,1]}$ be a   family of compact convex subsets of $\mathbb R^n.$  For our algorithm  we need a suitable  homotopy of smooth descriptions. 

\begin{definition}[Homotopy of smooth descritions]
(i) A \emph{homotopy of smooth descriptions} for the family $\left(C_t\right)_{t\in[0,1]}$  is a  continuously differentiable  function
$$p\colon [0,1] \times \mathbb{R}^n;\ (t,\mathbf{x}) \mapsto p_t(\mathbf{x})$$ such that $p_t$ provides a smooth description of $C_t$ for all $t\in[0,1)$.

(ii) The homotopy $p$ of smooth descriptions is called \emph{regular}, if for all $t^*\in [0,1],$ $\varepsilon>0,$ and all continuously differentiable functions  $\mathbf{a}\colon [t^*-\varepsilon,t^*+\varepsilon] \to \mathbb{R}^n$ with
 $\mathbf{a}_{t^*} \in \partial C_{t^*}$ and $p_t(\mathbf{a}_t) = 0 $ for all $t\in [t^*-\varepsilon,t^*+\varepsilon],$ we have  $\mathbf{a}_t \in \partial C_t$ for every $t \in [t^*-\varepsilon,t^*+\varepsilon]$. Note that we write $\mathbf a_t$ for $\mathbf a(t)$.
\end{definition}

Note that the descriptions of $C_t$ only needs to be smooth for $t < 1$.
Although the target feasible set $C_1$ does not need to be smooth (i.e. it has it has distinct vertices or kinks like in \cref{fig:introduction}), there exist homotopies of smooth descriptions with $p_1$ providing a description of $C_1$. 
 
 \begin{remark} 
 In words, regularity means that we cannot leave the boundary of the sets over time, without the functions $p_t$ noticing.
 Not every homotopy of smooth descriptions is regular.  As a counterexample one can construct  a homotopy of smooth descriptions for a family that switches in between two disjoint balls.

But under a certain continuity condition on the family $\left(C_t\right)_{t\in [0,1]}$, every homotopy of smooth descriptions is indeed regular. Assume, for example, that $$B:=\bigcup_{t\in [0,1)} \partial C_t \times \{t\} \subseteq \mathbb R^{n+1}$$ is an $n$-dimensional manifold (with boundary $\partial C_0$). Then at each point $(\mathbf x,t)\in B$ the gradient of $p$ is nonzero, already in space direction, since $p_t$ is a smooth description of $C_t$. Thus the implicit function theorem implies that the zero set of $p$ is the graph of a function in  $t$ and $n-1$ of the space variables, locally around $(\mathbf x,t)$. But since $B$ is an $n$-dimensional manifold contained in this zero set/graph, it  coincides with it locally.

Now for a continuous function $\mathbf  a\colon[t^*-\varepsilon,t^*+\varepsilon]\to\mathbb R^n$ with  $p_t(\mathbf a_t)=0$ for all $t$, the set $$I=\{ t\in [t^*-\varepsilon,t^*+\varepsilon]\mid \mathbf{a}_t\in \partial C_t\}$$ is easily checked to be open and closed. From $\mathbf a_{t^*}\in\partial C_{t^*}$ we obtain $t^*\in I$ and thus $I=[t^*-\varepsilon,t^*+\varepsilon].$

This implies that $\mathbf{a}_t \in \partial C_t$ for every $t < 1$. $\mathbf{a}_1 \in \partial C_1$ follows by continuity of $t \mapsto \mathbf{a}_t$ and $t \mapsto \mathbf{p}_t(\mathbf{x})$.
\end{remark}

In the context of the homotopy method, we now consider for every $t \in [0,1]$ the optimization problem
\begin{equation}
\label{eq:C_optimization}
    \max_{\mathbf{x} \in C_t} \  f(\mathbf{x})
\end{equation}
and we denote an optimal point by $\mathbf{a}_t \in \mathbb{R}^n$. 
To solve the optimization problems \eqref{eq:C_optimization}, we will make use of the fact that the condition $p_t(\mathbf{x}) = 0$ is necessary to have that $\mathbf{x} \in \partial C_t$. For this reason, we instead consider the following optimization problem
\begin{align}
\label{eq:root_optimization}
\begin{split}
    \max_{\mathbf{x} \in \mathbb{R}^n} \ & f(\mathbf{x})\\
    \text{ subject to } & p_t(\mathbf{x}) = 0
\end{split}
\end{align}
and use that fact that any optimal point of  \eqref{eq:C_optimization} is a stationary point of \eqref{eq:root_optimization}. We will then obtain a differential equation whose solution is a function $\mathbf{a}\colon [0,1] \to \mathbb{R}^n$ of stationary points of \eqref{eq:root_optimization}. We prove in \cref{prop:uniqueness} that this path of stationary points are the optimal points of \eqref{eq:C_optimization}.

\subsection{The Differential Equation for Stationary Points}\label{ssec:DGL_derivation}

We now derive a condition for stationary points of the optimization problem \eqref{eq:root_optimization} for every $t \in [0,1]$ via Lagrange multipliers. Afterwards, we translate these conditions into an initial value problem (see \cref{eq:diff_eq}).

Using Lagrange multipliers, one obtains that $\mathbf{a}_t$ is a \emph{stationary point}\footnote{i.e.\ $f(\mathbf{a}_t)$ is either a local minimum, local maximum or a saddle point when considered on the set $\{\mathbf{x} \in \mathbb{R}^n \mid p_t(\mathbf{x}) = 0\}$.} of the problem \eqref{eq:root_optimization} at time $t$ (assuming $\nabla p_t(\mathbf a_t)\neq 0$) if and only if 
\begin{enumerate}[label=($\text{C}_{\arabic*}$)]
    \item\label{cond:lagrange1} $\nabla f(\mathbf{a}_t) = \lambda_t \nabla p_t(\mathbf{a}_t)$
    \item\label{cond:lagrange2} $p_t(\mathbf{a}_t) = 0$
\end{enumerate}
where $\lambda_t \in \mathbb{R}$ is the Lagrange multiplier for the constraint $p_t$.

In the following, we translate these two conditions into an initial value problem, whose solution is a path $\mathbf{a}\colon [0,1] \to \mathbb{R}^n; t \mapsto \mathbf{a}_t$ where $\mathbf{a}_t$ is a stationary point of the constrained optimization problem at time $t$.

We first formulate an equivalent condition for \ref{cond:lagrange1} using the following Lemma.

\begin{lemma}\label{lem:minor_det}
Let $\mathbf{v}, \mathbf{w} \in \mathbb{R}^n \setminus \{0\}$. If $v_k \neq 0$, we have that $\mathbf{v} = \lambda \mathbf{w}$ for some $\lambda \in \mathbb{R}$ if and only if
\begin{equation}\label{eq:minor_det}
v_k w_{i} - v_{i} w_k = 0
\end{equation}
for every $i \in \{1, \ldots, n\}$.
\end{lemma}
\begin{proof}
The only if direction is immediate. So let $\mathbf{v}, \mathbf{w} \in \mathbb{R}^n \setminus \{0\}$ such that \eqref{eq:minor_det} holds. Note that also $w_k\neq 0$ holds, since otherwise we would get $\mathbf w=0.$ Now we define $\lambda$ via $v_k = \lambda w_k$. We now show that $v_i = \lambda w_i$ for every $i$. We have two cases:
\begin{enumerate}[label=(\roman*)]
    \item $w_i \neq 0$. Then $v_i, w_k \neq 0$ and we have that
    $$\frac{v_i}{w_i} = \frac{v_k}{w_k} = \lambda$$
    \item $w_i = 0$. Then $v_i w_k = 0$. Since $w_k \neq 0$, we have that $v_i = 0$ which implies that $v_i = \lambda w_i$.
\end{enumerate}
\end{proof}

By \cref{lem:minor_det}, \ref{cond:lagrange1} is equivalent to 
\begin{equation}\label{eq:parallel_condition}
    \mathcal{Q}_{k,i}(\mathbf{a}_t, t) \coloneqq \partial_k f(\mathbf{a}_t) \partial_{i} p_t(\mathbf{a}_t) -  \partial_{i} f(\mathbf{a}_t) \partial_{k} p_t(\mathbf{a}_t) = 0
\end{equation}
for $i \in \{1, \ldots, k-1, k+1, \ldots, n\}$, where $k$ is fixed with $\partial_{k} f(\mathbf{a}_t) \neq 0$.\footnote{Note that for a linear function $f \neq 0$, we can choose $k$ which satisfies $\mathbf{e}_k \notin \ker f$.} Note, that we use the notation $\partial_i h(\mathbf{x}) \coloneqq (\nabla h(\mathbf{x}))_i = \frac{\partial}{\partial x_i} h(\mathbf{x})$.

Differentiating \eqref{eq:parallel_condition} and \ref{cond:lagrange2}, we obtain an initial value problem for \emph{stationary points} $\mathbf{a}\colon  [0,1] \to \mathbb{R}^n$:
\begin{align*}
\mathbf{a}_0 &\in \mathbb{R}^n \textrm{ is an optimal point of \eqref{eq:C_optimization} at } t = 0\\ 
0 &= \big[ \nabla \mathcal{Q}_{k,i}(\mathbf{a}_t,t) \big]^t \cdot \frac{\diff \mathbf{a}_t}{\diff t}   + \partial_t \mathcal{Q}_{k,i}(\mathbf{a}_t,t) \quad \text{ for } i \in \{1, \ldots, k-1, k+1, \ldots, n\}\\
0 &= \big[\nabla p_t(\mathbf{a}_t)\big]^t \cdot \frac{\diff \mathbf{a}_t}{\diff t}  + \partial_t p_t(\mathbf{a}) 
\end{align*}
where $k$ is chosen such that $\partial_k f \neq 0$.
In summary, for every solution $\mathbf{a}\colon t \mapsto \mathbf{a}_t$ we have that $\mathbf{a}_t$ is a stationary point of the constraint optimization problem of $p_t$ and $f$. Equivalently, the differential equation reads as
\begin{equation}\label{eq:diff_eq}
K(\mathbf{a}_t, t) \cdot \frac{\diff \mathbf{a}_t}{\diff t} + m(\mathbf{a}_t, t) = 0
\end{equation}
where
\begin{equation}\label{eq:def_K_and_m}
K(\mathbf{x},t) = \begin{bmatrix} \nabla p_t(\mathbf{x})^t \\ \nabla \mathcal{Q}_{k,1}(\mathbf{x},t)^t \\ \vdots \\ \nabla \mathcal{Q}_{k,k-1}(\mathbf{x},t)^t \\ \nabla \mathcal{Q}_{k,k+1}(\mathbf{x},t)^t \\ \vdots \\ \nabla \mathcal{Q}_{k,n}(\mathbf{x},t)^t \end{bmatrix} \quad  \text{ and } \quad m(\mathbf{x},t) = \begin{bmatrix}\partial_t p_t(\mathbf{x})\\ \partial_t \mathcal{Q}_{k,1}(\mathbf{x},t)\\ \vdots \\ \partial_t \mathcal{Q}_{k,k-1}(\mathbf{x},t) \\ \partial_t \mathcal{Q}_{k,k+1}(\mathbf{x},t) \\ \vdots \\ \partial_t \mathcal{Q}_{k,n}(\mathbf{x},t) \end{bmatrix}.
\end{equation}

\subsection{Existence of a Unique Path of Optimal Solutions}\label{ssec:uniqueness_solution}

We now prove that there is a unique solution for the differential equation \eqref{eq:diff_eq}, if $\mathbf{x} \mapsto p_t(\mathbf{x})$ is a strictly quasi-concave function for every $t \in [0,1)$.\footnote{This means that the target convex set $C_1$ does not need to be strictly quasi-convex for the success of the method.} The proof is split into two parts:
\begin{enumerate}[label=(\roman*)]
    \item In \cref{lem:invertability} we show that $K(\mathbf{a}_t, t)$ is invertible, if $\mathbf{a}_t$ is a stationary point at time $t$.
    \item In \cref{prop:uniqueness}, we prove the solution is unique if $K(\mathbf{a}_t,t)$ is invertible for every $t$.
\end{enumerate}

We now show that $K(\mathbf{a}_t, t)$ is invertible under certain assumptions on $p_t$.

\begin{lemma}\label{lem:invertability} Let $\mathbf{a}_t$ be a stationary point of the constrained problem \eqref{eq:root_optimization} at time $t$. If $p_t$ is strictly quasi-concave at $\mathbf a_t$, then $K(\mathbf{a}_t,t)$ is invertible.
\end{lemma}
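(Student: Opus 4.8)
The plan is to show that the $n\times n$ matrix $K(\mathbf{a}_t,t)$ — whose rows are $\nabla p_t(\mathbf{a}_t)^t$ together with $\nabla\mathcal{Q}_{k,i}(\mathbf{a}_t,t)^t$ for the $n-1$ indices $i\neq k$, cf.\ \eqref{eq:def_K_and_m} — has trivial kernel. So I would fix $\mathbf{v}\in\mathbb{R}^n$ with $K(\mathbf{a}_t,t)\,\mathbf{v}=0$ and aim to conclude $\mathbf{v}=0$. Write $\mathbf{g}:=\nabla p_t(\mathbf{a}_t)$, which is nonzero since being a stationary point of \eqref{eq:root_optimization} is defined under exactly that assumption; $\mathbf{f}:=\nabla f$, a constant vector because $f$ is linear; and $H:=\nabla^2 p_t(\mathbf{a}_t)$, which is symmetric. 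Two ingredients will be fed into the argument: the Lagrange condition \ref{cond:lagrange1}, which reads $\mathbf{f}=\lambda_t\mathbf{g}$ and forces $\lambda_t\neq 0$ (otherwise $f=0$), so $\mathbf{f}$ and $\mathbf{g}$ are parallel; and the strict quasi-concavity \eqref{eq:quasi-concave} of $p_t$ at $\mathbf{a}_t$.

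First I would make the rows of $K(\mathbf{a}_t,t)$ explicit. Using that $f$ is linear, its partial derivatives $f_j=\partial_j f$ are constants, so from the definition of $\mathcal{Q}_{k,i}$ in \eqref{eq:parallel_condition} one gets $\nabla_{\mathbf{x}}\mathcal{Q}_{k,i}(\mathbf{a}_t,t)=f_k\,\nabla\partial_i p_t(\mathbf{a}_t)-f_i\,\nabla\partial_k p_t(\mathbf{a}_t)=f_k\,H\mathbf{e}_i-f_i\,H\mathbf{e}_k$, since $\nabla\partial_j p_t(\mathbf{a}_t)$ is the $j$-th column of $H$. Hence $K(\mathbf{a}_t,t)\,\mathbf{v}=0$ unpacks into: $\mathbf{g}^t\mathbf{v}=0$ from the first row, and $f_k\,(H\mathbf{v})_i-f_i\,(H\mathbf{v})_k=0$ for all $i\neq k$ (and trivially also for $i=k$) from the remaining rows.

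Next I would recognize the latter system as a parallelism statement and invoke \cref{lem:minor_det}, applied with the vector required to have a nonzero $k$-th coordinate taken to be $\mathbf{f}$ (legitimate, since $f_k\neq 0$ by the choice of $k$) and the other vector taken to be $H\mathbf{v}$. This yields that either $H\mathbf{v}=0$, or $H\mathbf{v}$ is a nonzero scalar multiple of $\mathbf{f}$; in both cases $H\mathbf{v}=\mu\,\mathbf{f}$ for some $\mu\in\mathbb{R}$. Pairing with $\mathbf{v}$ and using $\mathbf{g}^t\mathbf{v}=0$ together with $\mathbf{f}\parallel\mathbf{g}$ (so $\mathbf{f}^t\mathbf{v}=\lambda_t\,\mathbf{g}^t\mathbf{v}=0$) gives $\mathbf{v}^t H\mathbf{v}=\mu\,\mathbf{f}^t\mathbf{v}=0$. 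Finally, if $\mathbf{v}\neq 0$ then, since $\mathbf{v}^t\nabla p_t(\mathbf{a}_t)=\mathbf{g}^t\mathbf{v}=0$, the strict quasi-concavity implication \eqref{eq:quasi-concave} forces $\mathbf{v}^t H\mathbf{v}<0$, contradicting $\mathbf{v}^t H\mathbf{v}=0$. Therefore $\mathbf{v}=0$, and $K(\mathbf{a}_t,t)$ is invertible.

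I expect the only delicate point — more bookkeeping than a real obstacle — to be the application of \cref{lem:minor_det}: the lemma requires both vectors nonzero, so the degenerate branch $H\mathbf{v}=0$ must be split off explicitly so that the conclusion $H\mathbf{v}\in\mathrm{span}(\mathbf{f})$ holds unconditionally. I would also double-check that the nonvanishing facts $\mathbf{g}\neq 0$, $f_k\neq 0$, and $\lambda_t\neq 0$ are genuinely in force at a stationary point, so that neither the first row of $K$ nor the parallelism step is vacuous. Everything else is elementary linear algebra plus the two structural inputs (the Lagrange condition and strict quasi-concavity), and I do not anticipate further difficulties.
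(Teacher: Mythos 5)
Your proposal is correct and follows essentially the same route as the paper's proof: compute $\nabla\mathcal{Q}_{k,i}(\mathbf{a}_t,t)^t\mathbf{v}$, invoke \cref{lem:minor_det} to force $\nabla^2 p_t(\mathbf{a}_t)\mathbf{v}$ into $\mathrm{span}(\nabla f)$, and derive a contradiction from $\mathbf{v}^t\nabla p_t(\mathbf{a}_t)=0$ and strict quasi-concavity. The only cosmetic difference is that you defer strict quasi-concavity to the last step and split off $H\mathbf{v}=0$ explicitly, whereas the paper uses strict quasi-concavity earlier to ensure $\nabla^2 p_t(\mathbf{a}_t)\mathbf{v}\neq 0$ before applying \cref{lem:minor_det}.
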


\begin{proof}
We show that for every $0\neq \mathbf{v} \in \mathbb{R}^n$, we have that
$$\mathbf{v}^t \nabla p_t(\mathbf{a}_t) \neq 0 \quad \text{ or } \quad \mathbf{v}^t \nabla \mathcal{Q}_{k,i}(\mathbf{a}_t, t) \neq 0 \text{ for some } i \in \{1, \ldots, k-1, k+1, \ldots, n\}.$$
So assume that $\mathbf{v}^t \nabla p_t(\mathbf{a}_t) = 0$. Since $\mathbf a_t$ is a stationary point we have $\mathbf{v}^t \nabla f(\mathbf{a}_t) = 0$, and  by strict quasi-concavity of $p_t$ that $\mathbf{v}^t \nabla^2 p_t(\mathbf{a}_t) \mathbf{v} \neq 0.$
For every $i \in \{1, \ldots, k-1, k+1, \ldots, n\}$, we can rewrite
$$ \nabla \mathcal{Q}_{k,i}(\mathbf{a}_t, t)^t \mathbf{v} = \partial_k f(\mathbf{a}_t) \big(\nabla^2 p_t(\mathbf{a}_t) \mathbf{v}\big)_i - \partial_i f(\mathbf{a}_t) \big(\nabla^2 p_t(\mathbf{a}_t) \mathbf{v}\big)_k$$

By \cref{lem:minor_det}, we have that $\nabla\mathcal{Q}_{k,i}(\mathbf{a}_t,t)^t \mathbf{v} = 0$ for all $i \in \{1, \ldots, k-1, k+1, \ldots, n\}$ if and only if $\nabla^2 p_t(\mathbf{a}_t) \mathbf{v}$ is parallel to $\nabla f(\mathbf{a}_t)$. But since we have
$$\mathbf{v}^t \nabla^2 p_t(\mathbf{a}_t) \mathbf{v} \neq 0 \quad \text{ and } \quad \mathbf{v}^t \nabla f(\mathbf{a}_t) = 0$$
we get a contradiction.
\end{proof}

We are now ready to show that the initial value problem \eqref{eq:diff_eq} has a unique solution, which gives us the solutions of  \eqref{eq:C_optimization}. This is the main theoretical result of our paper.

\begin{theorem}\label{prop:uniqueness} Let $\left(C_t\right)_{t\in[0,1]}$ be a family of compact convex sets, all with non-empty interior in $\mathbb R^n$. 
Let $p$ be a homotopy of smooth descriptions satisfying the following properties:
\begin{itemize}
    \item $p$ is regular
    \item $p$ is three times continuously differentiable
    \item $p_t$ is strictly quasi-concave on $\partial C_t$ for all $t \in [0,1)$.
\end{itemize}
Then, if the initial value $\mathbf{a}_0$ is an optimal point of the optimization problem \eqref{eq:C_optimization} for $t = 0$, the initial value problem \eqref{eq:diff_eq} has a unique solution, which consists of the optimal points of the problems \eqref{eq:C_optimization}.
\end{theorem}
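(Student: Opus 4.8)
The plan is to read \eqref{eq:diff_eq} as a classical initial value problem and to combine Picard--Lindel\"of with \cref{lem:invertability} and the regularity hypothesis. First I would put \eqref{eq:diff_eq} in explicit form: on the open set $U=\{(\mathbf x,t): K(\mathbf x,t)\text{ is invertible}\}$ (open because, $p$ being three times continuously differentiable, $K$ and $m$ are continuously differentiable and $\det K$ is continuous), the equation reads $\tfrac{\diff \mathbf a_t}{\diff t}=F(\mathbf a_t,t):=-K(\mathbf a_t,t)^{-1}m(\mathbf a_t,t)$ with $F$ locally Lipschitz. The structural point that makes the whole argument work is that, by the chain rule, the left-hand side of \eqref{eq:diff_eq} equals $\tfrac{\diff}{\diff t}\big(p_t(\mathbf a_t),\,\mathcal Q_{k,1}(\mathbf a_t,t),\dots,\mathcal Q_{k,n}(\mathbf a_t,t)\big)^t$; hence along \emph{any} solution of \eqref{eq:diff_eq} the scalar functions $t\mapsto p_t(\mathbf a_t)$ and $t\mapsto \mathcal Q_{k,i}(\mathbf a_t,t)$ are constant. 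Since a smooth description vanishes with nonzero gradient on $\partial C_0$ and $\mathbf a_0$ is optimal for \eqref{eq:C_optimization} at $t=0$, the point $\mathbf a_0$ satisfies \ref{cond:lagrange1}--\ref{cond:lagrange2} (by the KKT conditions together with smoothness of $C_0$), so these functions vanish at $t=0$ and therefore vanish identically along the solution.

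Next I would run a connectedness argument to force any solution onto the boundary. Let $\mathbf a$ be a solution of \eqref{eq:diff_eq} on a subinterval of $[0,1]$ (in particular a $C^1$ curve); by the previous paragraph $p_t(\mathbf a_t)\equiv 0$, and $\mathbf a_0\in\partial C_0$. By regularity the set $\{t:\mathbf a_t\in\partial C_t\}$ is open (this is literally the regularity condition) and also closed: regularity is a two-sided statement, so applying it with center $t_n$ and radius $\varepsilon>|t^*-t_n|$ at points $t_n\to t^*$ lying in the set yields $t^*$ in the set. Hence $\mathbf a_t\in\partial C_t$ throughout, so $\nabla p_t(\mathbf a_t)\neq 0$; combined with $\mathcal Q_{k,i}(\mathbf a_t,t)=0$ for all $i$ and $\partial_kf\neq 0$, \cref{lem:minor_det} gives $\nabla f(\mathbf a_t)=\lambda_t\nabla p_t(\mathbf a_t)$, i.e.\ $\mathbf a_t$ is a stationary point of \eqref{eq:root_optimization}. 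For $t<1$ the description $p_t$ is strictly quasi-concave at $\mathbf a_t$, so \cref{lem:invertability} shows $K(\mathbf a_t,t)$ is invertible; thus every solution stays inside $U$ for $t<1$, where \eqref{eq:diff_eq} is the explicit, locally Lipschitz ODE $\tfrac{\diff \mathbf a_t}{\diff t}=F(\mathbf a_t,t)$. Local existence near $t=0$ (note $(\mathbf a_0,0)\in U$, since $\mathbf a_0$ is stationary and $p_0$ is strictly quasi-concave there) and uniqueness on the maximal interval then follow from Picard--Lindel\"of together with the usual open/closed argument on $[0,1)$.

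It remains to identify stationary points with optimal points and to push the solution up to $t=1$. Since $p_t$ is strictly quasi-concave at $\mathbf a_t\in\partial C_t$, this boundary point is exposed, $C_t$ coincides locally with $\{p_t\ge 0\}$, and $\nabla p_t(\mathbf a_t)$ is an inner normal of $C_t$ at $\mathbf a_t$; from $\nabla f(\mathbf a_t)=\lambda_t\nabla p_t(\mathbf a_t)$ the supporting hyperplane of $C_t$ at $\mathbf a_t$ then shows that $\mathbf a_t$ maximizes $f$ over $C_t$ when $\lambda_t<0$ and minimizes it when $\lambda_t>0$ (these are distinct, using that $C_t$ has nonempty interior so $f$ is non-constant on it). Now $\lambda_t=\langle\nabla f(\mathbf a_t),\nabla p_t(\mathbf a_t)\rangle/\|\nabla p_t(\mathbf a_t)\|^2$ depends continuously on $t$ and never vanishes (as $\nabla f\neq 0$), so it has constant sign; it is negative at $t=0$ because $\mathbf a_0$ is a maximizer, hence $\mathbf a_t$ is the unique maximizer over $C_t$ for all $t<1$. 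For the extension, let $[0,T)$ be the maximal interval of the solution; $\mathbf a_t\in C_t$ keeps $\mathbf a_t$ bounded, so it has a limit point $\mathbf a^*$ as $t\uparrow T$, and passing to the limit gives $p_T(\mathbf a^*)=0$ and $\mathcal Q_{k,i}(\mathbf a^*,T)=0$, while regularity again gives $\mathbf a^*\in\partial C_T$. If $T<1$, then $p_T$ is strictly quasi-concave at $\mathbf a^*$, hence $K(\mathbf a^*,T)$ is invertible, $(\mathbf a^*,T)\in U$, and the solution continues past $T$ — a contradiction. Hence $T=1$; extending by continuity gives a solution on all of $[0,1]$ consisting of optimal points, and uniqueness on $[0,1]$ follows as before.

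The main obstacle, and the part that leans hardest on the hypotheses, is the endpoint handling in the last step: extracting the limit point $\mathbf a^*$ and identifying it as a boundary point requires the family $(C_t)$ to stay uniformly bounded near $t=T$ and to vary continuously enough (for instance, that $\{(\mathbf x,t):\mathbf x\in C_t\}$ is closed), which one must extract from the existence of a $C^1$ homotopy of smooth descriptions; and at $t=1$ itself, where strict quasi-concavity—hence invertibility of $K$—is no longer assumed, the limiting curve should be shown to satisfy \eqref{eq:diff_eq} at worst in a limiting sense. A secondary delicate point is the precise, two-sided use of regularity needed to obtain both openness and closedness in the connectedness argument.
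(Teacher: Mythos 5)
Your proposal is correct and follows the same overall strategy as the paper's proof: pass to the explicit ODE $\frac{\diff\mathbf a_t}{\diff t}=-K^{-1}m$ where $K$ is invertible, apply Picard--Lindel\"of, use regularity to keep the solution on $\partial C_t$, use \cref{lem:invertability} (via strict quasi-concavity) to keep $K$ invertible, and pass to the limit $t\uparrow 1$. Two of your ingredients are genuinely sharper than the paper's terse treatment and worth noting. First, the observation that the left-hand side of \eqref{eq:diff_eq} is exactly $\frac{\diff}{\diff t}$ of the constraint vector, so that $p_t(\mathbf a_t)$ and the $\mathcal Q_{k,i}(\mathbf a_t,t)$ are conserved along \emph{any} solution, is what lets you control arbitrary solutions of the implicit ODE (and hence argue uniqueness and stationarity directly); the paper uses this only implicitly, for the first component, to invoke regularity. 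Second, where the paper dismisses a "jump from the optimal point to the least optimal point" by continuity and nonempty interior, you give a mechanism: the sign of the Lagrange multiplier $\lambda_t$, which is continuous and nonvanishing, distinguishes maximizer from minimizer. One small over-claim there: the definition of a smooth description does not fix the sign of $p_t$ on the interior of $C_t$ (replacing $p$ by $-p$ gives another smooth description), so $\nabla p_t(\mathbf a_t)$ need not be an \emph{inner} normal as you assert. The argument survives because the orientation of $\nabla p_t(\mathbf a_t)$ relative to $C_t$ cannot flip along the continuous path, so $\lambda_t$ still has a constant sign whose value at $t=0$ certifies maximality throughout; this is at worst the same level of informality as the paper's own "no jump" sentence, and your closing caveats about continuity of the family $(C_t)_t$ near the endpoint match the genuine delicacies of the paper's argument.
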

\begin{proof}
For $t^* \in [0,1)$ let $\mathbf{a}_{t^*} \in \partial C_{t^*}$ be the optimal point of the optimization problem \eqref{eq:C_optimization}, where uniqueness follows from strict quasi-concavity. By \cref{lem:invertability} we know that $K(\mathbf{a}_{t^*},t^*)$ is invertible. Since invertibility is an open condition, this is also the case for every element  contained in $R \coloneqq  B_\varepsilon(\mathbf{a}_{t^*})\times [t^*-\varepsilon, t^*+\varepsilon]$ for a suitable choice of $\varepsilon > 0$. Hence, we can locally define the differential equation
$$ \frac{\diff \mathbf{x}}{\diff t} = \mathcal{F}(\mathbf{x}, t) \coloneqq - K(\mathbf{x}, t)^{-1} \cdot m(\mathbf{x},t).$$
Since the entries of $m(\mathbf{x},t)$ and $K(\mathbf{x},t)^{-1}$ are still continuously differentiable, we have that $\mathcal{F}_{|_R}$ is Lipschitz-continuous on $R$. Hence there exists a unique solution $\mathbf{a}\colon [t^*-\varepsilon, t^*+\varepsilon] \to B_\varepsilon(\mathbf{a}_{t^*})$ with $\mathbf a(t^*)=\mathbf a_{t^*}$, by the Picard--Lindel\"of Theorem. 

From regularity of $p$  we obtain $\mathbf a_t\in\partial C_t$ for all $t\in[t^*-\varepsilon, t^*+\varepsilon]$. This implies that all $\mathbf a_t$ are the optimal points of \eqref{eq:C_optimization}. In fact, since all $C_t$ have nonempty interior, and $\mathbf a$ is continuous, a jump from the optimal point to the least optimal point, which is the other critical point on the boundary, is not possible. Hence, the local solutions must glue to a global solution.

This gives rise to a unique solution $\mathbf{a}\colon [0,1) \to \mathbb{R}^n$. By the boundedness and continuity of $\mathbf{a}$, there is a unique limiting point $$\mathbf{a}_1 \coloneqq \lim_{t \uparrow 1} \mathbf{a}_t$$ which is the optimal solution of the target problem.
\end{proof}

\begin{remark}\label{rem:f_homotopy}
Note that \cref{prop:uniqueness} remains valid even when the problem is generalized to allow:
\begin{enumerate}[label=(\roman*)]
    \item\label{gen_cond:1} a homotopy of objective functions.
    \item\label{gen_cond:2} concave functions satisfying $\nabla f_t \neq 0$ on $C_t$, ensuring that the optimal point lies on the boundary.
\end{enumerate}

Regarding \ref{gen_cond:1}, the construction of the differential equation follows the same process as in \cref{ssec:DGL_derivation}, with the only modifications being the replacement of $f$ by $f_t$ and the inclusion of its non-trivial time derivatives in $m(\mathbf{a}_t, t)$. Since the invertibility of $K(\mathbf{x}, t)$ remains unaffected by the choice of a homotopy for $f$, the result in \cref{prop:uniqueness} follows immediately.

Regarding \ref{gen_cond:2}, the only remaining step is to show that $K(\mathbf{a}_t,t)$ is invertible. This follows from a slight modification of the proof of \cref{lem:invertability}.
Since $\nabla f_t \neq 0$ on $C_t$, we have $\mathbf{a}_t \in \partial C_t$. By the concavity of $f_t$, it follows that
$$\nabla f_t(\mathbf{a}_t)^t (\mathbf{x}-\mathbf{a}_t) \leqslant 0, $$
for all $\mathbf{x} \in C_t$. Since $\nabla f_t(\mathbf{a}_t) \neq 0$ on $C_t$, this implies that $\lambda_t < 0$.

We now prove that for every $\mathbf{v} \in \mathbb{R}^n$, we have
$$\mathbf{v}^t \nabla p_t(\mathbf{a}_t) \neq 0 \quad \text{ or } \quad \mathbf{v}^t \nabla \mathcal{Q}_{k,i}(\mathbf{a}_t, t) \neq 0 \quad \text{ for some } i \in \{1,\ldots, n-1\}.$$
Assuming $\mathbf{v}^t \nabla p_t(\mathbf{a}_t) = 0$ implies $\mathbf{v}^t \nabla f_t(\mathbf{a}_t) = 0$. Moreover, since $\lambda_t < 0$ and $f_t$ is strictly concave, we obtain
$$\mathbf{v}^t A(\mathbf{a}_t,t) \mathbf{v} < 0,$$
where
$$A(\mathbf{a}_t,t) = \nabla^2 f_t(\mathbf{a}_t) - \lambda_t \nabla^2 p_t(\mathbf{a}_t).$$

For every $i \in \{1, \ldots, k-1, k+1, \ldots, n\}$, we rewrite
\begin{align*}
\nabla \mathcal{Q}_{k,i}(\mathbf{a}_t, t)^t \mathbf{v} = \big(A(\mathbf{a}_t,t) \mathbf{v}\big)_k \partial_i p_t(\mathbf{a}_t) - \big(A(\mathbf{a}_t,t) \mathbf{v}\big)_i \partial_k p_t(\mathbf{a}_t),
\end{align*}
where we used the relation $\partial_i f(\mathbf{a}_t) = \lambda_t \partial_i p_t(\mathbf{a}_t)$.

By \cref{lem:minor_det}, $\nabla\mathcal{Q}_{k,i}(\mathbf{a}_t,t)^t \mathbf{v} = 0$ for all $i \in \{1, \ldots, n-1\}$ if and only if $A(\mathbf{a}_t,t)\mathbf{v}$ is parallel to $\nabla p_t(\mathbf{a}_t, t)$. However, since we have
$$\mathbf{v}^t A(\mathbf{a}_t, t) \mathbf{v} < 0 \quad \text{ and } \quad \mathbf{v}^t \nabla p_t(\mathbf{a}_t) = 0,$$
this leads to a contradiction.
\end{remark}

\section{Examples of Homotopies}
\label{sec:examples}

In the following, we present two classes of examples of  homotopies that fulfill the assumptions from \cref{prop:uniqueness} above.  In \cref{ssec:homotopy_RZ} we apply our method to the class of hyperbolic and semidefinite programs. In \cref{ssec:singleConcaveConstraint} we apply our method to convex optimization problems, where the constraint is generated by a single concave function.

While the homotopy in \cref{ssec:singleConcaveConstraint} has the desired properties  for basic reasons, the homotopy for hyperbolic and real zero polynomials is based on the smoothing theory of hyperbolic polynomials which goes back to Nuij \cite{Nu68} and Renegar \cite{Re04}. For self-consistency reasons, we will state the most important results in \cref{ssec:homotopy_RZ}.

\subsection{Semidefinite and Hyperbolic Programming}\label{ssec:homotopy_RZ}

We now consider the problem of maximizing a linear function on a \emph{spectrahedron} with non-empty interior\footnote{We assume again without loss of generality that $\mathbf{0}$ is an interior point of the set of feasible points.}, i.e.
\begin{align*}
    \max_{\mathbf{x} \in \mathbb{R}^n} \ & f(\mathbf{x})\\
    \text{ subject to } \ & A(\mathbf{x}) \succcurlyeq  0
\end{align*}
where
$$A(\mathbf{x}) = I_s + x_1 A_1 + \cdots + x_n A_n$$
with $A_1, \ldots, A_n \in \Sym_{s}(\mathbb{R})$ being real symmetric matrices of size $s \times s$, $I_s$ the identity matrix of size $s$, and $X \succcurlyeq 0$ means that $X$ is positive semidefinite. This convex optimization problem is known as a \emph{semidefinite program}.
The feasible set
$$\mathcal{S}(A) \coloneqq \{\mathbf{x} \in \mathbb{R}^n\mid A(\mathbf{x}) \succcurlyeq 0 \}$$
is called a \emph{spectrahedron}.\footnote{Note that in general a spectrahedron is defined using an arbitrary symmetric matrix $A_0$ instead of $I_s$. However, every spectrahedron with non-empty interior can be represented (up to a translation) in this form (see for example  Lemma 2.12 and Corollary 2.16 in \cite{Ne23}).}

Generalizing spectrahedra to \emph{rigidly convex} sets (see \cref{def:real_zero}) leads to the notion of \emph{hyperbolic programming}. 

A hyperbolic program is a linear optimization problem over an affine slice of a hyperbolicity cone. Specifically, it is given by  
\begin{align*}
    \max_{\mathbf{x} \in \mathbb{R}^m} \ & f(\mathbf{x}) \\
    \text{subject to} \ & M \mathbf{x} = \mathbf{b}, \\
    & \mathbf{x} \in \Lambda_{++}(h),
\end{align*}  
where $\Lambda_{++}(h)$ denotes the hyperbolicity cone associated with a hyperbolic polynomial $h$ (see \cref{ssec:hyperbolicProgramming} for further details).

For a non-trivial affine slice, i.e., $M \neq 0$, hyperbolic programs can be reformulated as
\begin{align*}
    \max_{\mathbf{x} \in \mathbb{R}^n} \ & f(\mathbf{x}) \\
    \text{subject to} \ & \mathbf{x} \in \mathcal{R}(p),
\end{align*}  
where \(\mathcal{R}(p)\) is the rigidly convex set in a smaller space, defined by a real-zero polynomial $p$ (see \cref{def:real_zero}).\footnote{
Assume there exists a vector $\mathbf{e}$ in the interior of $\Lambda_{++}(h)$ such that $M\mathbf{e} = \mathbf{b}$. If $\ker(M)$ has dimension $n = m-1$, we can rotate and rescale the space so that $\mathbf{e} = \mathbf{e}_1$. This transformation gives the real zero polynomial  
$$p(\mathbf{x}) = h(1, x_1, \dots, x_n),$$  
with \(\mathcal{R}(p)\) corresponding to \(\Lambda_{++}(h) \cap \{x_0 = 1\}\) (see \cite[Section 2.3]{Ne23} and \cite[Definition 2.3.4]{Ne12}). If $\ker(M)$ has smaller dimension, this process can be iterated to obtain the formulation.}

This class of sets generalizes semidefinite programming over spectrahedra (see \cref{ex:SDP_as_RZpoly}).

Throughout this paper, we assume that the constraint $M \mathbf{x} = \mathbf{b}$ is non-trivial; otherwise, the feasible set would be $\Lambda_{++}(h)$, which is unbounded. This allows us to consider the hyperbolic program as an optimization problem over a rigidly convex set. Note that a similar situation occurs in semidefinite programming when $I_s$ is replaced by $\mathbf{0}$ in $A(\mathbf{x})$.

Rigidly convex sets arise from  so-called \emph{real zero polynomials} and share many properties with spectrahedra. The  \emph{generalized Lax conjecture} actually postulates that every rigidly convex set is a spectrahedron. While this statement is true for $n = 2$, it is open in higher dimensions \cite{Le05}.

Note that our homotopy approach is useful beyond the established methods for solving semidefinite programs:
\begin{itemize}
    \item It is not clear whether each rigidly convex set is actually a spectrahedron, so we solve a potentially larger class of problems. It seems that no efficient general methods to solve hyperbolic programs are available yet. 
    \item Even if a set is known to be a spectrahedron, it might be hard to find an explicit linear matrix inequality defining it.
    \item Even if the linear matrix inequality is known, it may be much larger than the degree of the corresponding real-zero polynomial, while we work directly with the polynomial. This behavior is on the one hand generic \cite{Rag19}, but there are also explicit examples with a proven super-polynomial overhead when described as spectrahedra \cite{Oli20}.
\end{itemize}

We will now review real zero polynomials and show that there exists a regular homotopy in the space of real zero polynomials. The special case of semidefinite programming follows as a special case (see \cref{ex:SDP_as_RZpoly}). 

\subsubsection{Rigidly Convex Sets}

In the following, we define and examine the notion of \emph{rigidly convex sets}. Moreover, we review a procedure to approximate every rigidly convex set by a smooth version thereof. Most of the content in this subsection is also part of \cite[Section 2]{Ne12}.

\begin{definition}[Real zero polynomials and rigidly convex sets]
\label{def:real_zero}
A polynomial $p\colon \mathbb{R}^n \to \mathbb{R}$  is called a \emph{real zero polynomial} if $p(\mathbf{0})>0$ and for every $\mathbf{0} \neq\mathbf{x} \in \mathbb{R}^n$, the univariate polynomial
$$p_{\mathbf{x}}(s) \coloneqq p(s \mathbf{x})$$
has only real roots. We call the set
$$\mathcal{R}(p) \coloneqq \{\mathbf{x} \in \mathbb{R}^n\mid  p_{\mathbf{x}} \text{ has no roots in } [0,1)\}$$
the \emph{rigidly convex set defined by} $p$.

We call a real zero polynomial smooth, if all $p_{\mathbf{x}}$ have only roots of multiplicity $1$. This implies that $\nabla p(\mathbf{x}) \neq 0$ whenever $p(\mathbf{x}) = 0$. Hence, the rigidly convex set $\mathcal{R}(p)$ of a smooth real zero polynomial is smooth.
\end{definition}

In the following we present some known results about real zero polynomials, necessary for the construction of the homotopy.

\begin{lemma}
\label{lem:RZ-properties}
Let $p\colon\mathbb{R}^n \to \mathbb{R}$ be a real zero polynomial. Then the following holds:
\begin{enumerate}[label=(\roman*)]
    \item $\mathcal{R}(p)$ is closed convex with $\mathbf{0}$ in its interior.
    \item If $p$ is smooth and $\mathcal{R}(p)$ is bounded, then $p$ is strictly quasi-concave on $\mathcal{R}(p)$.
\end{enumerate}
\end{lemma}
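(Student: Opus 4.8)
The plan is to prove the two parts separately, with part (i) essentially a consequence of the real-rootedness condition and part (ii) the substantive one.

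For part (i), I would first observe that each line through the origin meets $\mathcal{R}(p)$ in a segment: fix $\mathbf{0}\neq\mathbf{x}\in\mathbb{R}^n$ and look at $p_{\mathbf{x}}(s)=p(s\mathbf{x})$, a univariate polynomial with only real roots and $p_{\mathbf{x}}(0)=p(\mathbf{0})>0$. The condition $\mathbf{x}\in\mathcal{R}(p)$ says $p_{\mathbf{x}}$ has no root in $[0,1)$; by the analogous condition applied to the scaled direction, the set of $s>0$ with $s\mathbf{x}\in\mathcal{R}(p)$ is exactly $[0,s^*)$ or $[0,s^*]$ where $s^*$ is the smallest positive root of $p_{\mathbf{x}}$ (or $+\infty$ if there is none), because once $p_{\mathbf{x}}$ changes sign at its first positive root it cannot come back without $\mathcal{R}$-membership already having failed. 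Actually the cleanest route: define $\mathcal{R}(p)$ as the connected component of $\{p>0\}$ containing $\mathbf{0}$, intersected appropriately — but since I am told most of this is in \cite{Ne12}, I would simply cite the standard fact that for a real zero polynomial the set $\{\mathbf{x}: p_{\mathbf{x}}\text{ has no root in }[0,1)\}$ is the closure of the connected component of $\{\mathbf{x}:p(\mathbf{x})>0\}\cup\{\mathbf{0}\}$ containing $\mathbf{0}$, hence convex (the star-shapedness from the line analysis plus a short argument upgrades to convexity), closed, and contains $\mathbf{0}$ in its interior since $p(\mathbf{0})>0$ and $p$ is continuous. The convexity is the only non-immediate point; it follows from the fact that the polynomial restricted to any two-dimensional section is hyperbolic, and hyperbolicity cones are convex (Gårding).

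For part (ii), assume $p$ is smooth (all $p_{\mathbf{x}}$ have simple roots) and $\mathcal{R}(p)$ bounded; I must show that at every $\mathbf{x}\in\partial\mathcal{R}(p)$ the Hessian $\nabla^2 p(\mathbf{x})$ is negative definite on $(\nabla p(\mathbf{x}))^\perp$. Let $\mathbf{x}\in\partial\mathcal{R}(p)$; since $\mathcal{R}(p)$ is bounded and $\mathbf{0}$ is interior, $\mathbf{x}$ lies on the ray from $\mathbf{0}$ at the first positive root, i.e. $p_{\mathbf{x}}(1)=0$ and $p_{\mathbf{x}}(s)>0$ for $s\in[0,1)$; smoothness gives this root simple, so $p'_{\mathbf{x}}(1)<0$, i.e. $\mathbf{x}^t\nabla p(\mathbf{x})<0$, in particular $\nabla p(\mathbf{x})\neq 0$. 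Now take $\mathbf{v}\in(\nabla p(\mathbf{x}))^\perp\setminus\{0\}$ and consider the univariate restriction $g(s):=p(s\mathbf{x}+ \mathbf{v})$ — or better, to keep things homogeneous in a single direction, consider the two-variable polynomial $q(s,u)=p(s\mathbf{x}+u\mathbf{v})$. The key structural input is Gårding's theory: real zero polynomials are dehomogenizations of hyperbolic polynomials, and the relevant fact is that along the direction $\mathbf{x}$ (which points into the interior from $\mathbf{x}$, since $-\mathbf{x}$ direction — wait, $\mathbf{x}$ points away), the restriction $u\mapsto p(\mathbf{x}+u\mathbf{v})$ has all real roots with the relevant one at $u=0$; since $\mathbf{v}\perp\nabla p(\mathbf{x})$ that root at $u=0$ is at least double, so by real-rootedness and the non-vanishing of $p$ near $\mathbf{x}$ along $\mathbf{x}$ inside the cone, $u=0$ is a local maximum of $u\mapsto p(\mathbf{x}+u\mathbf{v})$ along that slice, giving $\mathbf{v}^t\nabla^2 p(\mathbf{x})\mathbf{v}\le 0$, and smoothness (simplicity of roots forbidding a root of multiplicity $\ge 3$, hence the second derivative cannot also vanish) upgrades this to strict inequality $\mathbf{v}^t\nabla^2 p(\mathbf{x})\mathbf{v}<0$. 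I would make this precise by invoking the standard fact (e.g. from \cite{Ne12} or Renegar) that for a hyperbolic polynomial with hyperbolicity direction $e$, the function $t\mapsto p(x+te)$ interlaces and its logarithmic derivative is monotone on the cone, which forces strict concavity of $p^{1/d}$ on the cone and hence strict quasi-concavity of $p$ on the boundary when the cone is pointed/bounded.

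The main obstacle is the precise bookkeeping in part (ii): relating the second derivative of the one-dimensional restriction $u\mapsto p(\mathbf{x}+u\mathbf{v})$ to negative definiteness of the full Hessian form on the hyperplane, and in particular ruling out the degenerate case where $\mathbf{v}^t\nabla^2 p(\mathbf{x})\mathbf{v}=0$. This is exactly where boundedness of $\mathcal{R}(p)$ and smoothness are both used: boundedness guarantees the hyperbolicity cone is pointed so that every $\mathbf{v}\perp\nabla p(\mathbf{x})$ genuinely meets the cone structure transversally, and smoothness (simple roots of all $p_{\mathbf{x}}$, adapted to the two-variable slice $q(s,u)$) prevents a triple root and thus forces the strict inequality. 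I expect to reduce everything to the univariate/bivariate slice and quote the hyperbolic-polynomial facts rather than reprove them, consistent with the paper's stated intention of only recalling known results in this subsection.
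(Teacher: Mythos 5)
The paper itself offers no proof here: part (i) is cited to \cite{Ga59} and part (ii) to \cite[Lemma 2.4]{Ne15}, so your part (i), which defers to the same Gårding-type facts, matches the paper's intent. Your sketch of part (ii), however, has a real gap at the step where you pass from $\mathbf v^t\nabla^2 p(\mathbf x)\mathbf v\le 0$ to the strict inequality. You invoke ``smoothness (simplicity of roots forbidding a root of multiplicity $\ge 3$)'' applied to $g(u)=p(\mathbf x+u\mathbf v)$, but smoothness of a real zero polynomial is by definition a statement about the restrictions $p_{\mathbf y}(s)=p(s\mathbf y)$ to lines \emph{through the origin}. The line $\mathbf x+\mathbb R\mathbf v$ does not pass through $\mathbf 0$: indeed $\mathbf x^t\nabla p(\mathbf x)<0$ (as you observe) while $\mathbf v^t\nabla p(\mathbf x)=0$, so $\mathbf x$ and $\mathbf v$ are linearly independent. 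Hence simplicity of the roots of the $p_{\mathbf y}$ says nothing a priori about the order of vanishing of $g$ at $u=0$, and ruling out a triple or higher root there is precisely the nontrivial content of the cited lemma, not a direct consequence of the definitions.

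Two secondary issues in the same paragraph: the assertion that $u\mapsto p(\mathbf x+u\mathbf v)$ ``has all real roots'' does not follow from Gårding's theorem on its face, because $(0,\mathbf v)$ is not a direction in the closed hyperbolicity cone of $\widetilde p$ when $\mathcal R(p)$ is bounded; establishing it already requires passing to the two-variable slice $q(s,u)=p(s\mathbf x+u\mathbf v)$ and invoking something at the level of the Helton--Vinnikov determinantal representation (or an equivalent PSD-perturbation argument on a determinant). And the closing appeal to strict concavity of $p^{1/d}$ on the cone ``forcing'' strict quasi-concavity of $p$ on the boundary does not transfer on its own: on $\partial\mathcal R(p)$ the function $p^{1/d}$ vanishes and its Hessian is singular, so that implication also needs an argument. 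Given the paper is only recalling a known result, the safe route is to cite \cite[Lemma 2.4]{Ne15} without reproducing the sketch; as written, the sketch would not survive scrutiny.
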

\begin{proof}
For $(i)$, see \cite{Ga59}, for $(ii)$, see \cite[Lemma 2.4]{Ne15}.
\end{proof}

We now discuss the example of determinantal polynomials. The corresponding rigidly convex sets in this case are spectrahedra.

\begin{example}
\label{ex:SDP_as_RZpoly}
Let $A(\mathbf{x}) = I_s + x_1 A_1 + \cdots + x_n A_n$ be a linear matrix function, and define the polynomial
    $$p(\mathbf{x}) \coloneqq \det(A(\mathbf{x})).$$
   Then $\lambda \neq 0$ is an eigenvalue of $A(\mathbf{x})$ with multiplicity $k$ if and only if $\frac{1}{1-\lambda}$ is a root of $p_{\mathbf{x}}$ with multiplicity $k$.
    This implies that $p$ is a real zero polynomial and $\mathcal{R}(p) = \mathcal{S}(A)$ is a spectrahedron.

Note that multiplicities $k \geq 2$ arise when the spectrahedron can be written as the intersection of other spectrahedra. If $C(\mathbf{x})$ is the linear matrix inequality generated by matrices $A_i \oplus B_i$, then
$$ \mathcal{S}(C) = \mathcal{S}(A) \cap \mathcal{S}(B). $$
Consequently, the corresponding real-zero polynomial satisfies
$$ p(\mathbf{x}) = \det(C(\mathbf{x})) = \det(A(\mathbf{x})) \cdot \det(B(\mathbf{x})). $$

If $ \mathbf{x} \in \partial \mathcal{S}(A) $ and $ \mathbf{x} \in \partial \mathcal{S}(B) $, then $ s = 1 $ is a root of $ s \mapsto p_{\mathbf{x}}(s) $ with multiplicity at least $k \geq 2$, reflecting the kink on the boundary of $ \mathcal{S}(C) $ at $ \mathbf{x} $.

\end{example}

\subsubsection{Smoothing of Real Zero Polynomials}

We introduce the \emph{smoothing operator} $\mathcal{F}_{\varepsilon}$ on the space of real-zero polynomials, which transforms each rigidly convex set into a smooth rigidly convex set that approximates the original. This smoothing procedure was initially developed for hyperbolic polynomials \cite{Nu68, Re04} (see also \cite{Ne12} for an overview). Since we are working in the affine setting, we introduce the smoothing operator directly at this level.

For this purpose, we introduce the \emph{Renegar derivative}.

\begin{definition}[Homogenization and the Renegar derivative]
Let $p\colon \mathbb{R}^n \to \mathbb{R}$ be a polynomial of degree $d$. We define the homogenization of $p$ by
$$\widetilde{p}(x_0,\mathbf{x}) \coloneqq x_0^d \cdot p\left(\frac{\mathbf{x}}{x_0}\right).$$
The \emph{Renegar derivative} of $p$ is defined as
$$(\partial_R p)(\mathbf{x}) \coloneqq \left(\frac{\partial}{\partial x_0} \widetilde{p}\right)(1,\mathbf{x}).$$
\end{definition}

\begin{definition}[Smoothing of Real-Zero Polynomials]
Let $p \colon \mathbb{R}^n \to \mathbb{R}$ be a polynomial. We define the linear operator $\mathcal{F}_{\varepsilon}$ as
$$\mathcal{F}_{\varepsilon}[p] \coloneqq p + \varepsilon \partial_R p,$$
Applied to real-zero polynomials, we denote it by the \emph{$\varepsilon$-smoothed version of} $p$.
\end{definition}

We now state some properties of the smoothing operator.

\begin{lemma}\label{lem:smoothing_properties}
Let $p\colon \mathbb{R}^n \to \mathbb{R}$ be a real zero polynomial. For $\varepsilon > 0$, the $\varepsilon$-smoothed version of $p$ has the following properties:
\begin{enumerate}[label=(\roman*)]
    \item\label{lem:sm_pr:cond1} $\mathcal{F}_{\varepsilon}[p]$ is a real zero polynomial, and hence $\mathcal{R}(\mathcal{F}_{\varepsilon}[p])$ is convex.
    \item\label{lem:sm_pr:cond2} If $\mathcal{R}(p)$ is bounded, then $\mathcal{R}(\mathcal{F}_{\varepsilon}[p])$ is bounded.
    \item\label{lem:sm_pr:cond3} If $\lambda$ is a root of multiplicity $k \geqslant 2$ of $p_{\mathbf{x}}$, then $\lambda$ is a root of multiplicity $k-1$ of $\mathcal{F}_{\varepsilon}[p]_{\mathbf{x}}$. All other roots of $\mathcal{F}_{\varepsilon}[p]_{\mathbf{x}}$ have multiplicity $1$.
\end{enumerate}
\end{lemma}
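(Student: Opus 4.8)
The plan is to reduce the whole statement to the one-variable situation by restricting $p$ to lines through the origin, where $\mathcal{F}_{\varepsilon}$ becomes a classical operation on real-rooted polynomials. Fix $\mathbf{0}\neq\mathbf{x}\in\mathbb{R}^n$ and put $q:=p_{\mathbf{x}}$, a univariate polynomial of degree $d'\leqslant d:=\deg p$ with $q(0)=p(\mathbf{0})>0$ and only real, nonzero roots. A short computation with the homogenization shows that $\widetilde{p}(x_0,s\mathbf{x})$, read as a polynomial in $(x_0,s)$, is the degree-$d$ homogenization of $q$; hence $(\partial_R p)(s\mathbf{x})=d\,q(s)-s\,q'(s)$, so that
$$(\mathcal{F}_{\varepsilon}[p])_{\mathbf{x}}(s)=q(s)+\varepsilon(\partial_R p)(s\mathbf{x})=(1+\varepsilon d)\,q(s)-\varepsilon s\,q'(s),$$
a polynomial of degree exactly $d'$ (the top coefficients combine with the factor $1+\varepsilon(d-d')>0$). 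Writing $a:=d+\tfrac1\varepsilon>d'$, for $s>0$ this equals $-\varepsilon\,s^{a+1}\,\tfrac{\diff}{\diff s}\!\left[s^{-a}q(s)\right]$, and the analogous identity holds on $(-\infty,0)$ after replacing $q(s)$ by $q(-s)$. Thus, up to the nowhere-vanishing prefactor, the roots of $(\mathcal{F}_{\varepsilon}[p])_{\mathbf{x}}$ on each open half-line are exactly the critical points of $s\mapsto s^{-a}q(s)$ there (with multiplicities), while $s=0$ is never a root since $(\mathcal{F}_{\varepsilon}[p])_{\mathbf{x}}(0)=(1+\varepsilon d)q(0)\neq 0$.

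For \ref{lem:sm_pr:cond1} and \ref{lem:sm_pr:cond3} I would count critical points of $H(s):=s^{-a}q(s)$ on $(0,\infty)$. Because $a>d'$ one has $H(s)\to+\infty$ as $s\to0^{+}$ and $H(s)\to 0$ as $s\to+\infty$, and the zeros of $H$ on $(0,\infty)$ are precisely the positive roots of $q$ with the same multiplicities. At a positive root of $q$ of multiplicity $k$, $H'$ vanishes to order exactly $k-1$; between two consecutive positive roots of $q$, Rolle gives at least one zero of $H'$; and beyond the largest positive root, $H$ returns from $0$ to $0$ at $+\infty$ without being identically zero, producing one more interior critical point. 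Summing these contributions, together with the mirror count on $(-\infty,0)$ and the absence of a root at $0$, yields at least $d'$ real roots of $(\mathcal{F}_{\varepsilon}[p])_{\mathbf{x}}$ counted with multiplicity; since its degree is $d'$, it has only real roots (this proves \ref{lem:sm_pr:cond1}), each of the above counts is exact, the "free" critical points — one in each gap between consecutive roots of $q$ and one at each unbounded end — are forced to be simple, and a root of $q$ of multiplicity $k\geqslant2$ survives with multiplicity exactly $k-1$. This is \ref{lem:sm_pr:cond3}. As this holds for every $\mathbf{x}$ and $(\mathcal{F}_{\varepsilon}[p])(\mathbf{0})=(1+\varepsilon d)p(\mathbf{0})>0$, the polynomial $\mathcal{F}_{\varepsilon}[p]$ is a real zero polynomial, and convexity of $\mathcal{R}(\mathcal{F}_{\varepsilon}[p])$ follows from \cref{lem:RZ-properties}(i).

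For \ref{lem:sm_pr:cond2} I would first observe that $\mathcal{R}(p)$ is bounded if and only if no ray $\{s\mathbf{x}:s\geqslant0\}$ is contained in it, which by the definition of $\mathcal{R}(p)$ is equivalent to: for every $\mathbf{0}\neq\mathbf{x}$ the polynomial $q=p_{\mathbf{x}}$ has a root in $(0,\infty)$. Assuming this, $H(s)=s^{-a}q(s)$ vanishes somewhere on $(0,\infty)$ while tending to $+\infty$ at $0^{+}$ and to $0$ at $+\infty$; hence $H$ attains an interior local extremum on $(0,\infty)$, so $H'$ — and with it $(\mathcal{F}_{\varepsilon}[p])_{\mathbf{x}}$ — has a root in $(0,\infty)$. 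As this holds for every $\mathbf{x}$, no ray lies in $\mathcal{R}(\mathcal{F}_{\varepsilon}[p])$, so $\mathcal{R}(\mathcal{F}_{\varepsilon}[p])$ is bounded.

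The main obstacle is the bookkeeping in the second paragraph: checking that exactly $d'$ roots remain real (nothing escapes to $\mathbb{C}$) and that no new multiple roots are created, while carefully treating the degenerate situations $\deg q<\deg p$ and $q$ having multiple roots. This is precisely the content of the classical smoothing theory of Nuij \cite{Nu68} and Renegar \cite{Re04}, which one may alternatively invoke to obtain all three items directly.
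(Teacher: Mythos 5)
Your proof is correct and fleshes out the ``standard calculus methods'' that the paper's own argument for \ref{lem:sm_pr:cond1} and \ref{lem:sm_pr:cond3} only alludes to (it mainly cites \cite{bor,Nu68} for the preservation of real-rootedness under $h\mapsto h+th'$). For \ref{lem:sm_pr:cond2} the paper passes to the homogenization and invokes the intermediate value theorem in the $x_0$-variable, whereas you argue via a critical point of $H(s)=s^{-a}q(s)$ on $(0,\infty)$; these are the same observation read through the substitution $x_0 \leftrightarrow 1/s$.
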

\begin{proof}
\ref{lem:sm_pr:cond1} and \ref{lem:sm_pr:cond3} follow from the fact that for every polynomial $h$ (with $h(\mathbf{0})\neq 0$) which has only real zeros, the polynomial $h + t h'$ also has only real zeros, with multiplicities as required. This can be deduced from general results in \cite{bor,Nu68} for example, or checked directly with standard calculus methods.

For \ref{lem:sm_pr:cond2}, let $q$ be the homogenization for $\mathcal{F}_{\varepsilon}[p]$. Then, we have that
$$ q(x_0, \mathbf{x}) = \widetilde{p}(x_0, \mathbf{x}) + \varepsilon x_0 \partial_{x_0} \widetilde{p}(x_0, \mathbf{x}).$$
$\mathcal{R}(p)$ is bounded, if for every $\mathbf{x} \in \mathbb{R}^n$, there exists some $x_0 > 0$ such that $\widetilde{p}(x_0, \mathbf{x}) = 0$. Similarly, $\mathcal{R}(\mathcal{F}_{\varepsilon}[p])$ is bounded, if for every $\mathbf{x} \in \mathbb{R}^n$, there is a $x_0 > 0$ such that $q(x_0, \mathbf{x}) = 0$.
Assuming the former, the latter is true, since if $h$ has a positive root, then the same is true for $h + \varepsilon t h'$, by the intermediate value theorem.
\end{proof}

\begin{remark}
By \cref{lem:smoothing_properties}, the $m$-fold composition of the smoothing operator
$$\mathcal{F}_{\varepsilon}^m(p) = p + \sum_{i=1}^m \varepsilon^i \partial_R^i p$$ reduces the roots of $p_{\mathbf{x}}$ with multiplicity $k \geqslant m + 1$ to roots with multiplicity $k-m$. All other roots of $\mathcal{F}_{\varepsilon}^m[p]_{\mathbf{x}}$ have multiplicity $1$.

Therefore, every rigidly-convex set $\mathcal{R}(p)$ can be smoothened by applying the smoothing operator $\mathcal{F}_{\varepsilon}$ sufficiently often. Moreover, if $\mathcal{R}(p)$ is bounded, the smoothed version also remains bounded.
\end{remark}

\subsubsection{A Regular Homotopy for Real Zero Polynomials}

We now define the homotopy for optimization on rigidly convex sets. 
Note that the product of two real zero polynomials $p,q$ is again a real zero polynomial, and we have 
\begin{equation}\label{eq:intersection}
    \mathcal{R}(p) \cap \mathcal{R}(q) = \mathcal{R}(p q).
\end{equation}

\begin{definition}\label{def:RZ-homotopy}
Let $p_0, p_1$ be real zero polynomials with bounded rigidly convex sets $\mathcal{R}(p_0)$ and $\mathcal{R}(p_1)$. Moreover, let $\varepsilon\colon [0,1] \to [0,\infty)$ be a smooth function with $\varepsilon(0) = \varepsilon(1) = 0$ and $\varepsilon(t) > 0$ for $t \in (0,1)$. We define the homotopies
\begin{enumerate}[label=(\roman*)]
    \item $\hat{p}_t(\mathbf{x}) \coloneqq p_0((1-t) \mathbf{x}) \cdot p_1(t \mathbf{x})$
    \item $p_t^{[k]}(\mathbf{x}) \coloneqq \mathcal{F}_{\varepsilon(t)}^k[\hat{p}_t](\mathbf{x}).$
\end{enumerate}
\end{definition}

\begin{theorem}\label{thm:RZ-homotopy}
The homotopies $\hat{p}_t$ and $p_t$ satisfy the following:
\begin{enumerate}[label=(\roman*)]
    \item\label{thm:RZ-homotopy:cond:1} If $\mathcal{R}(p_0)$ and $\mathcal{R}(p_1)$ are bounded, then $\mathcal{R}(\hat{p}_t)$ and $\mathcal{R}(p_t)$ are bounded.
    \item\label{thm:RZ-homotopy:cond:2} If $p_0$ and $p_1$ are smooth, then $p_t^{[1]}$ is smooth for every $t \in [0,1)$.
    \item\label{thm:RZ-homotopy:cond:3} If $p_0$ is smooth and $p_1$ has roots with multiplicity $k \geq 2$, then $p_t^{[k]}$ is smooth for every $t \in [0,1)$.
\end{enumerate}
\end{theorem}
\begin{proof}
\ref{thm:RZ-homotopy:cond:1} The boundedness of $\mathcal{R}(\hat{p}_t)$ follows immediately from \eqref{eq:intersection} and the boundedness of $\mathcal{R}(p_t)$ from \cref{lem:smoothing_properties} \ref{lem:sm_pr:cond2}.

\ref{thm:RZ-homotopy:cond:3} If $p_0$ and $p_1$ has roots with multiplicity $k$, we have that $s \mapsto \hat{p}_t(s \mathbf{x})$ has zeros of multiplicity at most $k+1,$ for every $t \in (0,1)$ and $0\neq \mathbf x \in \mathbb{R}^n$. Hence, applying the smoothing operator $\mathcal{F}_{\varepsilon(t)}^{k}$ gives rise to a smooth polynomial $p_t$ for every $t \in [0,1)$.

\ref{thm:RZ-homotopy:cond:2} Special case of \ref{thm:RZ-homotopy:cond:3} with $k = 1$.
\end{proof}

Note that $\hat{p}_t$ is in general not smooth, even if $p_0$ and $p_1$ are smooth (see for example \cref{fig:RZ-homotopy} for $\varepsilon = 0$ and $t = 0.6$). More specifically, the multiplicity of roots in $\hat{p}_t$ is in worst case the sum of the highest multiplicity arising in $p_0$ and $p_1$.

\begin{figure}
    \centering
    \includegraphics[scale=0.55]{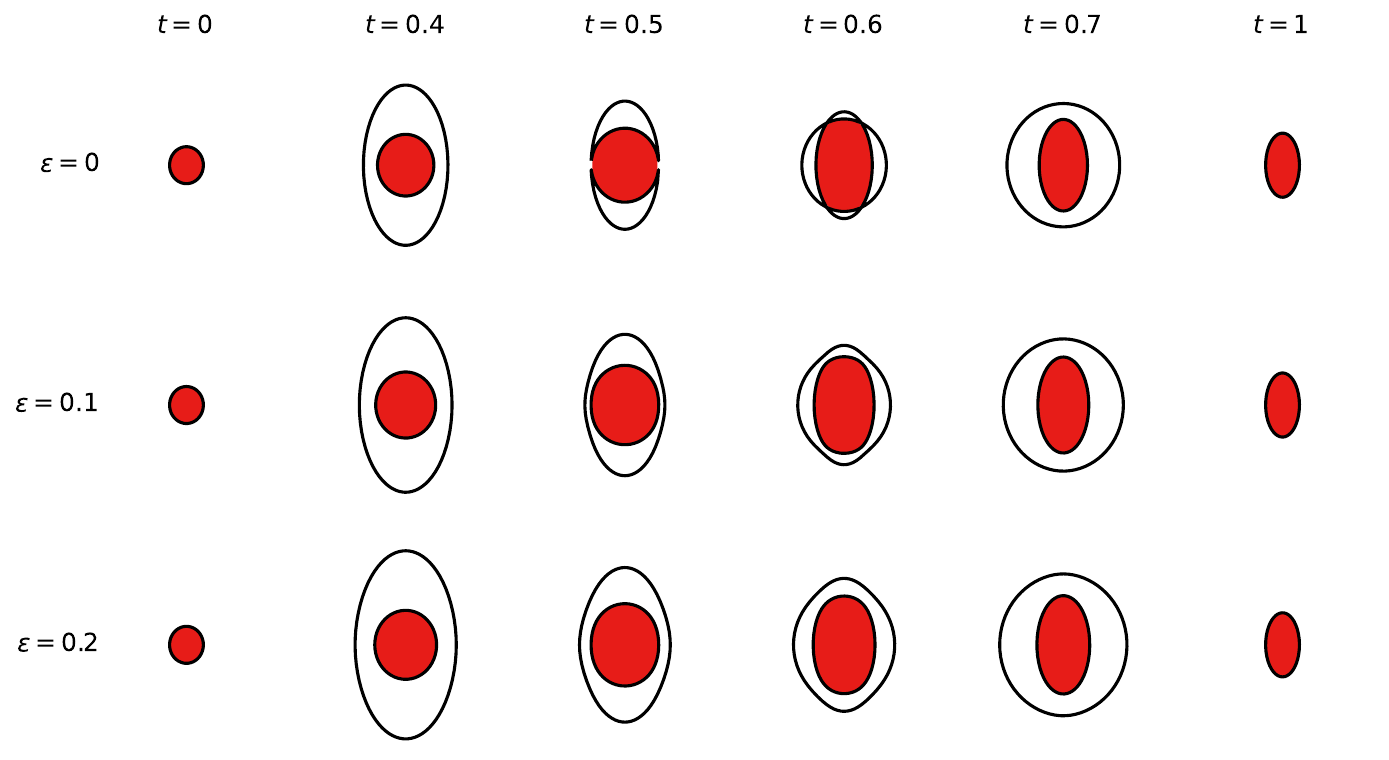}
    \caption{Examples of the homotopy within the space of real zero polynomials for different smoothing parameters. For $\varepsilon = 0$, the homotopy leads to non-smooth convex sets (see for example $t = 0.5$ and $t = 0.6$), as soon as $\varepsilon > 0$, the roots of multiplicity $2$ become roots of multiplicity $1$.}
    \label{fig:RZ-homotopy}
\end{figure}

\begin{theorem}\label{thm:main_RZ}
If $p_0$ is smooth, $p_1$ has roots with multiplicity $k$, and $\mathcal{R}(p_0), \mathcal{R}(p_1)$ are bounded, then the homotopy $p_t^{[k]}$ from \cref{def:RZ-homotopy} is regular, and therefore the induced differential equation \eqref{eq:diff_eq} has a unique solution.
\end{theorem}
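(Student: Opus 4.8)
The plan is to handle the two assertions in turn: first establish that the homotopy $p_t$ of \cref{def:RZ-homotopy} is regular, and then deduce the unique solvability of \eqref{eq:diff_eq} from \cref{prop:uniqueness}. The second step is bookkeeping: setting $C_t:=\mathcal R(p_t)$, I would verify the hypotheses of \cref{prop:uniqueness}. Each $C_t$ is compact and convex with $\mathbf 0$ in its interior by \cref{lem:RZ-properties}(i) together with \cref{thm:RZ-homotopy}\,\ref{thm:RZ-homotopy:cond:1}; each $p_t$ is a smooth real zero polynomial by \cref{thm:RZ-homotopy}\,\ref{thm:RZ-homotopy:cond:2} (at $t\in\{0,1\}$ one has $\varepsilon(t)=0$, so $p_t=\hat p_t$ is a positive scalar multiple of $p_0$, resp.\ $p_1$), hence $p_t$ vanishes on $\partial C_t$ and $\nabla p_t\neq 0$ on $\{p_t=0\}$, so $p$ is a homotopy of smooth descriptions; $p$ is three times continuously differentiable since it is polynomial in $\mathbf x$ with coefficients that are polynomials in $t$, multiplied at worst by the smooth scalar $\varepsilon(t)$; and each $p_t$ is smooth with $\mathcal R(p_t)$ bounded, so \cref{lem:RZ-properties}(ii) gives strict quasi-concavity of $p_t$ on $\mathcal R(p_t)\supseteq\partial C_t$. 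Thus, modulo regularity, \cref{prop:uniqueness} applies, and the real work is the regularity claim.

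To prove regularity I would fix $t^*\in[0,1]$, $\varepsilon>0$ and a continuously differentiable path $\mathbf a\colon J\to\mathbb R^n$, $J:=[t^*-\varepsilon,t^*+\varepsilon]$, with $\mathbf a_{t^*}\in\partial C_{t^*}$ and $p_t(\mathbf a_t)=0$ on $J$, and follow the open--closed strategy sketched in the Remark after \cref{def:RZ-homotopy}: since $t^*\in I:=\{t\in J\mid \mathbf a_t\in\partial C_t\}$ and $J$ is connected, it suffices to show $I$ is open and closed. The device I would introduce is the ``radial exit'' function: for $\mathbf x\neq\mathbf 0$, let $\rho_t(\mathbf x)$ be the smallest positive root of $s\mapsto p_t(s\mathbf x)$, well defined because $\mathcal R(p_t)$ is bounded; then $\mathbf y\in\partial C_t$ exactly when $\mathbf y\neq\mathbf 0$ and $\rho_t(\mathbf y)=1$, and one always has $\rho_t(\mathbf a_t)\leq 1$, since $s=1$ is itself a root of $s\mapsto p_t(s\mathbf a_t)$ (and $\mathbf a_t\neq\mathbf 0$ because $p_t(\mathbf a_t)=0\neq p_t(\mathbf 0)$). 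The key claim is that $(t,\mathbf x)\mapsto\rho_t(\mathbf x)$ is continuous on $[0,1]\times(\mathbb R^n\setminus\{\mathbf 0\})$: the coefficients of $s\mapsto p_t(s\mathbf x)$ depend continuously on $(t,\mathbf x)$, every $p_t$ being a real zero polynomial forces these univariate polynomials to have only real roots, and every $p_t$ being smooth forces those roots to be simple, so no real root can appear or disappear and the smallest positive one varies continuously. Granting this, $I=\{t\in J\mid \rho_t(\mathbf a_t)=1\}$ is closed.

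Openness is the heart of the proof, and here smoothness of the $p_t$ enters decisively. If $t_0\in I$ but $I$ is not a neighborhood of $t_0$, choose $t_j\to t_0$ with $s_j:=\rho_{t_j}(\mathbf a_{t_j})<1$; by continuity of $\rho$ and $\mathbf a$ we get $s_j\to\rho_{t_0}(\mathbf a_{t_0})=1$. For each $j$ the polynomial $s\mapsto p_{t_j}(s\mathbf a_{t_j})$ vanishes at the two \emph{distinct} points $s_j$ and $1$, both tending to $1$; since its coefficients converge to those of $s\mapsto p_{t_0}(s\mathbf a_{t_0})$, continuity of roots (with multiplicity) forces $s=1$ to be a root of multiplicity $\geq 2$ of $s\mapsto p_{t_0}(s\mathbf a_{t_0})$, contradicting the smoothness of the real zero polynomial $p_{t_0}$. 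Hence $I$ is open, so $I=J$ and $p$ is regular; \cref{prop:uniqueness} then yields the unique solution of \eqref{eq:diff_eq}.

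I expect the main obstacle to be the continuity of $\rho$ (and of the colliding roots) near the endpoints $t=0,1$, where $\deg\hat p_t$ drops from $\deg p_0+\deg p_1$ to $\deg p_0$, resp.\ $\deg p_1$, so some roots of $s\mapsto p_t(s\mathbf x)$ escape to infinity as $t\to 0^+$ or $t\to 1^-$; one must check separately that the smallest positive root, and the root at $s=1$, stay in a fixed bounded region and vary continuously there --- for instance by passing to the homogenizations $\widetilde{p_t}$ and using that $\mathcal R(p_t)\to\mathcal R(p_0)$, resp.\ $\mathcal R(p_1)$, together with \cref{lem:smoothing_properties}. The remaining points (boundedness, smoothness and convexity of the $C_t$, and the joint $C^3$-regularity of $p$, which needs only that $\varepsilon$ is smooth and that the Renegar term is polynomial in $(t,\mathbf x)$ after homogenizing to the fixed degree $\deg p_0+\deg p_1$) are routine given \cref{lem:RZ-properties}, \cref{lem:smoothing_properties} and \cref{thm:RZ-homotopy}.
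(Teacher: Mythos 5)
Your argument is correct and takes essentially the same approach as the paper's, whose entire proof of regularity is the two-sentence observation that if the tracked point left the innermost ring of zeros of $p_t$, some $p_t$ would have a multiple zero along a line, contradicting smoothness. Your open--closed argument via the radial function $\rho_t$, together with the explicit verification of the hypotheses of \cref{prop:uniqueness}, is a careful and correct expansion of that remark; the degree-drop concern you flag at the end is not actually an obstruction, since the relevant roots (the smallest positive one, which is always bounded above by $1$ because $s=1$ is a root, and the fixed root at $s=1$ itself) stay in a compact region where local continuity of simple roots suffices.
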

\begin{proof}
Regularity follows from smoothness of the $p_t^{[k]}$. Indeed, if a point leaves the innermost ring of zeros of $p_t$ over time, this would imply a multiple zero of some $p_t^{[k]}$ on some line.
\end{proof}

For non-smooth target sets, the smoothing degree $k$ must be sufficiently large to ensure the uniqueness of the solution in \eqref{eq:diff_eq}. However, in practice, the numerical solution obtained from the differential equation often converges to the correct result even for smaller values of $k$ (see \cref{fig:lin_func}). This behavior may be attributed to two factors:
\begin{enumerate}[label=(\roman*)]
    \item The path of optimal points does not encounter any root of $\hat{p}_t$ with worst-case multiplicity, preserving the uniqueness of the solution.
    \item The numerical integrator, due to discretization in $t$, effectively bypasses the critical points.
\end{enumerate}

\textit{Remark}
A similar statement to\cref{thm:RZ-homotopy} can also be shown when dropping the smoothness-assumption for $\mathcal{R}(p_1)$. In this situation $s \mapsto \hat{p}_t(s \mathbf{a})$ can have zeros up to multiplicity $d \coloneqq \deg(p_1)$. Applying $\mathcal{F}_{\varepsilon(t)}$ $d$ times to $\hat{p},$ instead of only once, gives rise to a homotopy that satisfies the assumptions.

\begin{figure}
    \centering
    \includegraphics[scale=0.52]{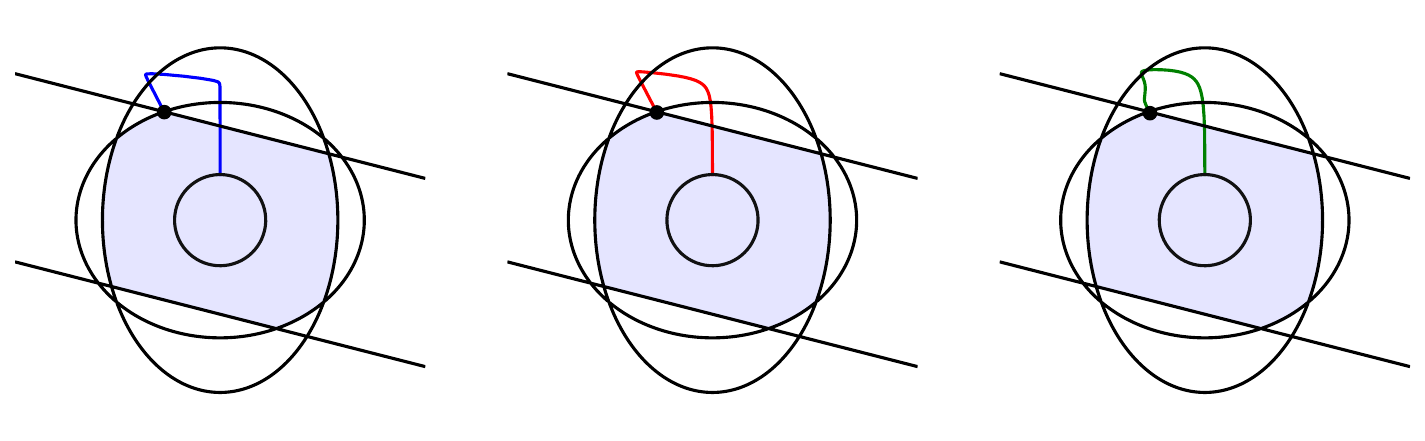}
    
    \label{fig:lin_func}
     
    \caption{Optimizing the linear function $f(x_1, x_2) = x_2$ over a (non-smooth) spectrahedron (blue). The three paths are obtained by using the the homotopies $\hat{p}_t$ (left, blue), $p_t^{[1]}$ (middle, red) and $p_{t}^{[2]}$ (right, green). While only $p^{[2]}$ is a smooth homotopy, numerically all paths lead to a correct solution. The blue path on the left has two singularities, the red path in the middle has only one singularity and the green path on the right has no singularity.}
\end{figure}

\subsection{Convex Optimization Problems with a Single Constraint}\label{ssec:singleConcaveConstraint}

In the following, we consider a convex optimization problem, where the non-empty feasibility set is defined by a single concave constraint. Without loss of generality, we assume that all feasible sets contain $\mathbf{0}$ in their interior.

More precisely, let $p_0,p_1\colon \mathbb{R}^n \to \mathbb{R}$ be continuously differentiable concave functions which are strictly positive at $\mathbf{0}$. The two corresponding convex sets are defined as
$$C_i := \{\mathbf{x} \in \mathbb{R}^n\mid  p_i(\mathbf{x}) \geqslant 0 \}.$$
By concavity of $p_i$, these are closed convex sets. Moreover, assume that the objective function $f\colon \mathbb{R}^n \to \mathbb{R}$ is linear.
We also assume to know 
the optimal value of
$$\max_{\mathbf{x} \in C_0} \  f(\mathbf{x}) $$
and we now want to compute the optimal value for the same problem on $C_1$.

For this purpose, consider the homotopy
$$p(t,\mathbf x)=p_t(\mathbf{x}) \coloneqq (1-t) p_0(\mathbf{x}) + t p_1(\mathbf{x})$$
for $t \in [0,1],$ which give rise to the sets 
$$C_t \coloneqq \{\mathbf{x} \in \mathbb{R}^n\mid  p_t(\mathbf{x}) \geqslant 0 \}.$$
Since $p_t$ is concave for every $t \in [0,1]$, every  $C_t$ is a closed convex set.
We now show that the homotopy satisfies the conditions for our method.

\begin{proposition}\label{prop:convexHomotopy}
Let $p_0, p_1$ be concave functions such that $p_0(\mathbf{0}), p_1(\mathbf{0}) > 0$. Then for every $t\in [0,1]$ we have:
\begin{enumerate}[label=(\roman*)]
    \item\label{convexHomotopy:cond:1} $\mathbf{x} \in \partial C_t$ if and only if $p_t(\mathbf{x}) = 0$.
    \item\label{convexHomotopy:cond:2} $C_t$ is smooth for every $t \in [0,1]$, i.e. $\nabla p_t \neq 0$ on $\partial C_t$.
    \item\label{convexHomotopy:cond:3} $C_0 \cap C_1 \subseteq C_t \subseteq C_0 \cup C_1$.
    \item\label{convexHomotopy:cond:4} If  $p_0$ is strictly concave, then $p_t$ is strictly concave for every $t \in [0,1)$.
\end{enumerate}
In particular, (iii) implies that $C_t$ has non-empty interior for every $t \in [0,1]$. Moreover, $C_t$ is bounded if $C_0, C_1$ are bounded.
\end{proposition}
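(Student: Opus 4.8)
The plan is to verify the four items essentially by unwinding the definitions, using concavity of $p_t$ (which is immediate: $p_t$ is a convex combination of the concave functions $p_0,p_1$, hence concave) and the fact that $p_t(\mathbf 0) = (1-t)p_0(\mathbf 0) + t p_1(\mathbf 0) > 0$.

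For \ref{convexHomotopy:cond:1}, the containment "$p_t(\mathbf x)=0 \Rightarrow \mathbf x\in\partial C_t$" is the subtle direction. Since $p_t$ is continuous, $\{p_t > 0\}$ is open and contained in $C_t$, so any $\mathbf x$ with $p_t(\mathbf x)=0$ lies in $\partial C_t$ provided we can find points arbitrarily close to $\mathbf x$ with $p_t > 0$ and points (in fact $\mathbf x$ itself) with $p_t \le 0$ arbitrarily close — so it suffices to exhibit nearby points with strictly positive value. Here I would use the segment from $\mathbf 0$ to $\mathbf x$: by concavity, $p_t((1-\delta)\mathbf x) \ge (1-\delta)p_t(\mathbf x) + \delta p_t(\mathbf 0) = \delta p_t(\mathbf 0) > 0$ for $\delta\in(0,1)$, and letting $\delta\to 0$ gives points in the interior converging to $\mathbf x$. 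Conversely, if $\mathbf x\in\partial C_t$ then $p_t(\mathbf x)\ge 0$ by closedness, and $p_t(\mathbf x)>0$ would force $\mathbf x$ into the open set $\{p_t>0\}\subseteq\interior C_t$, a contradiction; so $p_t(\mathbf x)=0$.

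For \ref{convexHomotopy:cond:2}, suppose $\nabla p_t(\mathbf x) = 0$ at some $\mathbf x\in\partial C_t$. A concave differentiable function with vanishing gradient at $\mathbf x$ attains its global maximum there, so $p_t(\mathbf y)\le p_t(\mathbf x)=0$ for all $\mathbf y$; but $p_t(\mathbf 0)>0$, a contradiction. For \ref{convexHomotopy:cond:3}, the inclusion $C_0\cap C_1\subseteq C_t$ is immediate since $p_0(\mathbf x),p_1(\mathbf x)\ge 0$ implies $p_t(\mathbf x)\ge 0$; for $C_t\subseteq C_0\cup C_1$, if $\mathbf x\notin C_0\cup C_1$ then $p_0(\mathbf x)<0$ and $p_1(\mathbf x)<0$, hence $p_t(\mathbf x)<0$, so $\mathbf x\notin C_t$. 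For \ref{convexHomotopy:cond:4}, $\nabla^2 p_t = (1-t)\nabla^2 p_0 + t\nabla^2 p_1$; if $\nabla^2 p_0$ is negative definite and $\nabla^2 p_1$ is negative semidefinite, then for $t\in[0,1)$ the coefficient $(1-t)>0$ makes the sum negative definite. Finally, the "in particular" claims follow: $\mathbf 0\in\interior C_0\cap\interior C_1\subseteq C_t$ with $p_t(\mathbf 0)>0$ shows $\mathbf 0\in\interior C_t$ via the argument of \ref{convexHomotopy:cond:1}; and $C_t\subseteq C_0\cup C_1$ gives boundedness when $C_0,C_1$ are bounded.

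The only place requiring a little care is \ref{convexHomotopy:cond:1}, specifically making precise that $\{p_t>0\}$ is exactly $\interior C_t$ (not just contained in it) — this is where the concavity-plus-interior-point argument via the segment to $\mathbf 0$ is essential, and it is the step I would write out most carefully; the rest is routine.
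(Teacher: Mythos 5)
Your items (ii)--(iv) and the concluding remarks are correct and essentially coincide with the paper's own argument: the paper also proves (ii) by observing that a vanishing gradient of a concave function would force a global maximum, contradicting $p_t(\mathbf 0)>0$, and its proofs of (iii) and (iv) are the same as yours. The gap is in (i), precisely at the step you single out as the one needing care. To show that $p_t(\mathbf x)=0$ implies $\mathbf x\in\partial C_t$, the nontrivial point is that $\mathbf x\notin\interior C_t$, i.e.\ that every neighborhood of $\mathbf x$ contains points \emph{outside} $C_t$, which means points where $p_t<0$ strictly. Exhibiting nearby points with $p_t>0$ does not achieve this (it only re-establishes $\mathbf x\in C_t$, already immediate from $p_t(\mathbf x)=0\geqslant 0$), and "$\mathbf x$ itself with $p_t\leqslant 0$" does not help either, since a point with $p_t=0$ still belongs to $C_t$. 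So your reduction "it suffices to exhibit nearby points with strictly positive value" is not valid, and the inequality $p_t((1-\delta)\mathbf x)\geqslant \delta\, p_t(\mathbf 0)>0$ proves the wrong half.

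The fix is the mirror image of your segment inequality. Since $p_t(\mathbf x)=0<p_t(\mathbf 0)$ forces $\mathbf x\neq\mathbf 0$, write $\mathbf x=\tfrac{1}{1+\mu}\,(1+\mu)\mathbf x+\tfrac{\mu}{1+\mu}\,\mathbf 0$ for $\mu>0$; concavity gives $0=p_t(\mathbf x)\geqslant \tfrac{1}{1+\mu}p_t((1+\mu)\mathbf x)+\tfrac{\mu}{1+\mu}p_t(\mathbf 0)$, hence $p_t((1+\mu)\mathbf x)\leqslant-\mu\,p_t(\mathbf 0)<0$. Letting $\mu\downarrow 0$ produces points outside $C_t$ converging to $\mathbf x$, so $\mathbf x\in\partial C_t$. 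For comparison, the paper argues instead via the (ii)-type observation that $\nabla p_t(\mathbf x)\neq 0$ at any zero of $p_t$, and that a nonvanishing gradient at a zero of a $C^1$ function yields nearby points of both signs; your segment route, once completed as above, is equally valid and does not even use differentiability for this item.
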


\begin{proof}
\ref{convexHomotopy:cond:1} The only if direction is clear due to continuity. Now assume that $p_t(\mathbf{x}) = 0$. By concavity, we have that $\nabla p_t(\mathbf{x}) \neq 0$. But this implies that for every $\varepsilon$-neighborhood of $\mathbf{x}$ there are points $\mathbf{y}$ and $\mathbf{z}$ such that $p_t(\mathbf{y}) > 0$ and $p_t(\mathbf{z}) < 0$ which shows the statement.

\ref{convexHomotopy:cond:2} Using the same argument as in \ref{convexHomotopy:cond:1}, we have that for every $t \in [0,1]$, $\nabla p_t(\mathbf{x}) \neq 0$ for every $\mathbf{x} \in \partial C_t$, which shows the statement.

\ref{convexHomotopy:cond:3} Let $\mathbf{x} \in C_0 \cap C_1$, then $p_0(\mathbf{x}), p_1(\mathbf{x}) \geqslant 0$ and therefore $$(1-t) p_0(\mathbf{x}) + t p_1(\mathbf{x}) \geqslant 0,$$ which shows the one inclusion. For the second inclusion, assume that ${\mathbf{x} \notin C_0 \cup C_1}$. Then $p_0(\mathbf{x}), p_1(\mathbf{x}) < 0$ and hence
$$(1-t) p_0(\mathbf{x}) + t p_1(\mathbf{x}) < 0,$$ which shows the reverse inclusion.

\ref{convexHomotopy:cond:4} Immediate, since the sum of a concave and a strictly concave function is strictly concave.
\end{proof}

Now let  $p_0, p_1$ be three times continuously differentiable concave functions, with  $p_0$ strictly concave and $p_0(\mathbf{0}), p_1(\mathbf{0}) > 0$. Further assume the sets   $C_i=\{\mathbf x\mid p_i(\mathbf x)\geqslant 0\}$ are bounded for $i=0,1$. 
Then \cref{prop:convexHomotopy} together with \cref{prop:uniqueness} guarantees that the optimal solutions of the problems
$$ \max_{\mathbf{x} \in C_t} \   f(\mathbf{x})$$
can be found by solving the differential equation \eqref{eq:diff_eq}, where the initial value is given by the unique optimal value of the problem
$$ \max_{\mathbf{x} \in C_0} \   f(\mathbf{x}).$$
If $p_1$ is in addition strictly quasi-concave on the boundary of its nonnegativity set, then the solution of the optimization problem for $t = 1$ is unique.

\begin{example}\label{ex:convexOptimization problem}
Let $p_0(\mathbf{x}) = 1 - \Vert \mathbf{x} \Vert_2^2$ and $p_1$ a three times continuously differentiable concave function with $p_1(\mathbf{0}) > 0$ and $C_1$ bounded.

If $f(\mathbf{x}) = \mathbf{w}^t \mathbf{x}$ with $0\neq \mathbf w\in\mathbb R^n$, the optimal point of the problem for $t = 0$ is given by $\mathbf{a}_0 = \mathbf{w}/\Vert \mathbf{w} \Vert_2$. Since $\nabla f = \mathbf{w} \neq 0$ on $\mathbb{R}^n$, we have that the solution of  \eqref{eq:diff_eq} gives rise to the optimal points of the optimization problems \eqref{eq:C_optimization}.
    In \cref{fig:numeric_convex_homotopy} we show a numeric example for the path of optimal solutions for 
    $$p_1(\mathbf{x}) = 10 - \sum_{k=1}^4 \exp(\mathbf{b}_k^t \mathbf{x}),$$
    where $\mathbf{b}_1 = (\frac{1}{5},\frac{1}{5})^t$, $\mathbf{b}_2 = (-\frac{1}{10}, \frac{1}{5})^t$, $\mathbf{b}_3 = -\mathbf{b}_1$, and $\mathbf{b}_4 = (\frac{1}{10}, - \frac{1}{10})^t$, and the functional is given by $\mathbf{w} = (1,\frac{1}{2})^t$.
\end{example}

\begin{figure}
    \centering
         \includegraphics[scale=0.55]{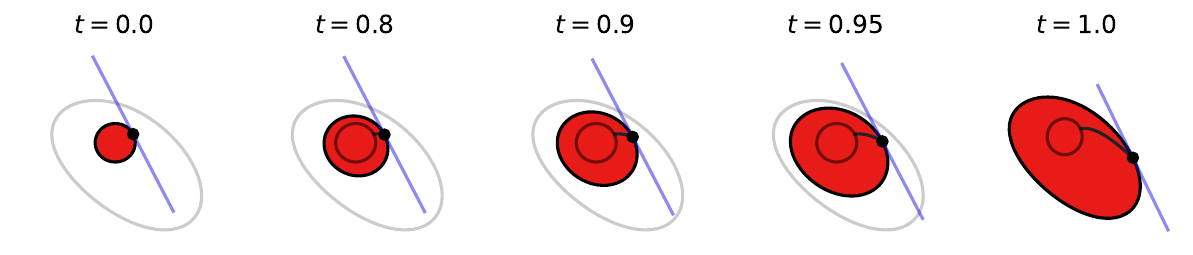}
     \caption{Homotopy of the convex optimization problem presented in \cref{ex:convexOptimization problem}. }
     \label{fig:numeric_convex_homotopy}
\end{figure}

\section{Numerical Examples and Benchmarks}
\label{sec:num_examples}

In the following, we present several numerical examples and compare the runtime of the homotopy method with known methods. This includes the following convex optimization problems:
\begin{itemize}
 \item Optimizing a linear function on a rigidly convex set generated by the elementary symmetric polynomials (\cref{ssec:hyperbolicProgramming}).
    \item Optimizing a linear function on a $k$-ellipsoid (\cref{ssec:kellipsoid}).
    \item Optimizing a linear function on a $p$-norm ball (\cref{ssec:pnorm_ball}).
    \item Geometric programming with a single constraint (\cref{ssec:geometricProgramming}).
   
\end{itemize}

Our starting set is always the unit ball, defined by the inequality $p_0(\mathbf x)=1-\Vert \mathbf x\Vert ^2\geqslant 0.$ Since $p_0$ is both strictly concave and a real zero polynomial, it fulfills all the requirements from above. And as already mentioned in \cref{ex:convexOptimization problem} above, computing the optimal point for a linear optimization problem on the unit ball in indeed trivial.

We emphasize that we did not focus on an optimized implementation of the homotopy method. In particular, the choice of the integrator for the differential equation is not tailored to the specific problem situation. Also, the implementation of the derivatives were done in the most basic way. Therefore, we believe that a more optimized implementation of the homotopy method might further  improve the runtime of the presented method significantly. 

The runtime-experiments were conducted on a standard laptop with 16GB RAM. The algorithms were run on Python, the semidefinite programs, the second-order conic programs, and the geometric programs were solved using the \textsc{Mosek} solver (version 9.3.20) \cite{mosek} via the \textsc{Cvxpy} package (version 1.3.2) \cite{Ag18, Di16}.
The differential equations for the homotopy method were solved via a standard predictor-corrector method using the explicit Runge-Kutta method of order $5(4)$. The errors for the optimal points were always fixed to be at most $10^{-5}$ for all methods.
\subsection{Hyperbolic Programming}\label{ssec:hyperbolicProgramming}

In the following, we benchmark the homotopy method for hyperbolic programs (\cref{ssec:homotopy_RZ}). Since other well-established solvers for hyperbolic programs seem not to exists, we compare our method to  semidefinite programming.  In particular, we consider the class of elementary symmetric polynomials $s_k$ (defined in \eqref{eq:elementary_symmetric}). While the elementary symmetric polynomial is of degree $k$, the best known way to represent its rigidly convex set needs matrices of a much larger size (see \cref{tab:matrixSize_elementary_symmetric}), which become intractable for SDP solvers already for small dimension $n$ and small parameters $k$.

For this purpose, recall the notion of hyperbolic polynomials. Let ${h\colon \mathbb{R}^{n+1} \to \mathbb{R}}$ be a homogeneous polynomial. We call $h$ \emph{hyperbolic in the direction} $\mathbf{e} \in \mathbb{R}^{n+1}$ if the univariate polynomial $t \mapsto h(\mathbf{y} - t \mathbf{e})$ has only real roots, for every $\mathbf{y} \in \mathbb{R}^{n+1}$. Hyperbolic polynomials give rise to a closed convex cone
$$\mathcal{C}_{\mathbf{e}}(h) \coloneqq \{\mathbf{y} \in \mathbb{R}^{n+1}\mid h(\mathbf{y} - t \mathbf{e}) \text{ has only nonnegative roots} \},$$
called the \emph{hyperbolicity cone} of $h$ in direction $\mathbf{e}$ (see  \cite{Ga59,Ne23,Re04} for further details). 

Note that hyperbolicity is precisely the homogenized version of the real zero property introduced in \cref{ssec:homotopy_RZ}.
\emph{Hyperbolic optimization} is the task of optimizing a linear objective function over an affine subspace of a hyperbolicity cone. This affine subspace is precisely a rigidly convex set of a real-zero polynomial (\cref{def:real_zero}). 

One particular class of hyperbolic polynomials are the \emph{elementary symmetric polynomials}. For $n \in \mathbb{N}$ and $k \in \{1, \ldots, n\}$, let
\begin{equation}\label{eq:elementary_symmetric}
s_{k}(y_0, \ldots, y_n) \coloneqq \sum_{0 \leqslant i_1 < \ldots < i_k \leqslant n} y_{i_1} \cdots y_{i_k}
\end{equation}
be the elementary symmetric polynomial of degree $k$ in $n+1$ variables.
All elementary symmetric polynomials are hyperbolic in direction $\mathbf{e} = (1, \ldots, 1)$. Moreover, we have that $s_k(\mathbf{e}) > 0$ for every $k \in \{1, \ldots, n\}$, and
$$ \mathbb{R}_{+}^{n+1} = \mathcal{C}_{\mathbf{e}}(s_{n+1}) \subseteq \mathcal{C}_{\mathbf{e}}(s_{n}) \subseteq \cdots \subseteq \mathcal{C}_{\mathbf{e}}(s_{2}) \subseteq \mathcal{C}_{\mathbf{e}}(s_{1}) = \left\{\mathbf{y} \in \mathbb{R}^{n+1}\mid \sum_{i=0}^{n} y_i \geqslant 0 \right\}.$$
In the following, we consider a bounded affine slice of the hyperbolicity cones $\mathcal{C}_{\mathbf{e}}(s_k)$. For this purpose, let $p_k$ be a dehomogenized version of $s_k$, defined as
\begin{equation}\label{eq:def_pk}
    p_k(x_1, \ldots, x_n) \coloneqq s_k\left(e_0 - \sum_{i=1}^{n} x_i, e_1 + x_1, \ldots, e_n + x_n \right).
\end{equation}
Then $p_k$ is a real-zero polynomial, since $p_k(\mathbf{0}) = s_k(\mathbf{e}) > 0$ and for every $s \in \mathbb{R} \setminus \{0\}$ and $\mathbf{x} \in \mathbb{R}^n \setminus \{0\}$, we have that
\begin{align*}
    p_k(\lambda \mathbf{x}) = s_k\left(e_0 - \sum_{i=1}^{n} \lambda x_i, e_1 + \lambda x_1, \ldots, e_n + \lambda x_n \right) = \lambda^k s_k\left( \frac{1}{\lambda} \mathbf{e} + \mathbf{y} \right),
\end{align*}
where $\mathbf{y} = (-\sum_{i=1}^n x_i, x_1, \ldots, x_n)$. Moreover, the rigidly convex set $\mathcal{R}(p_k)$ is bounded for $k\geqslant 2,$ see \cref{app:boundedness_sk}. Note that  $\mathcal{R}(p_k)$ corresponds (up to an isometry) to the set
\begin{equation}\label{eq:affinceSlice}
\mathcal{C}_{\mathbf{e}}(s_k) \cap \Big[\mathbf{e} + \{\mathbf{x} \in \mathbb{R}^{n+1}\mid \mathbf{e}^t \mathbf{x} = 0\} \Big].
\end{equation}

In the following, we will benchmark the problem
\begin{equation}\label{eq:symmetric_program}
        \max_{\mathbf{x} \in \mathcal{R}(p_k)} \ f(\mathbf{x}) 
\end{equation}
where $f\colon \mathbb{R}^n \to \mathbb{R}$ is a linear function.
We will use two different procedures:
\begin{enumerate}[label=(\roman*)]
    \item Applying the homotopy method for hyperbolic polynomials (\cref{ssec:homotopy_RZ}).
    \item\label{symmetric_poly_method:2} Applying an SDP interior point solver to the best known spectrahedral representation of $\mathcal{R}(p_k)$.
\end{enumerate}
Specifically, we will compare the runtimes of these algorithms for the situation where $k = n-1$, as well as for the situation where $k = \lfloor \frac{n+1}{2}\rfloor$.

Regarding \ref{symmetric_poly_method:2}, we use the best known spectrahedral representation of $\mathcal{C}_{\mathbf{e}}(p_k)$ introduced in \cite{Br14}. According to the construction, the matrix size of this representation corresponds to the number of strings of length at most $k-1$ with distinct characters from an alphabet of size $n+1$. In other words, the matrix size $s(k,n)$ is given by
$$s(k,n) = \sum_{i = 0}^{k-1} \binom{n+1}{i} \cdot i! \geqslant \left(\frac{n+1}{k-1}\right)^{k-1} \cdot (k-1)!$$
where we have used the lower bound $\binom{n}{i} \geqslant (n/i)^i$ for $i = k-1$. In particular, if $k \leqslant n$ and $k \sim n$, then $s(k,n) \sim c^k \cdot k!$ which makes the spectrahedral representation untractable already for small values $n$ and $k$. Values of $s(k,n)$ for the benchmarked situations are presented in \cref{tab:matrixSize_elementary_symmetric}.

\begin{table}
\small
    \centering
    \begin{tabular}{c | C{1.3cm} | C{1.3cm} | C{1.3cm} | C{1.3cm} | C{1.3cm} | C{1.3cm}}
        & $n = 3$ & $n = 4$ & $n = 5$ & $n = 6$ & $n = 7$ & $n = 8$\\
        \hline
        $k = n-1$ & $5$ & $26$ & $157$ & $\mathbf{1100}$ & $\mathbf{8801}$ & $\mathbf{79210}$ \\
        \hline
        $k = \lfloor \frac{n+1}{2} \rfloor$ & $5$ & $6$ & $37$ & $50$ & $\mathbf{401}$ & $\mathbf{5861}$
    \end{tabular}
    \caption{Matrix size of the spectrahedral representation of  $\mathcal{R}(p_k)$ for different dimensions $n$. The bold cases denote the matrix sizes, where the solver did not reach a conclusion (memory error).}
    \label{tab:matrixSize_elementary_symmetric}
\end{table}

\begin{figure}
    \centering
    \begin{subfigure}[t]{0.47\textwidth}
    \centering
        \includegraphics[scale=0.48]{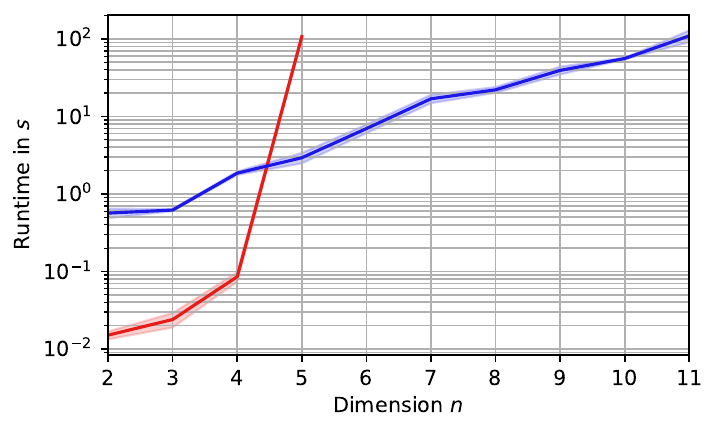}
        \subcaption{$k = n-1$}
    \end{subfigure}
    \hspace*{\fill}
    \begin{subfigure}[t]{0.47\textwidth}
    \centering
        \includegraphics[scale=0.48]{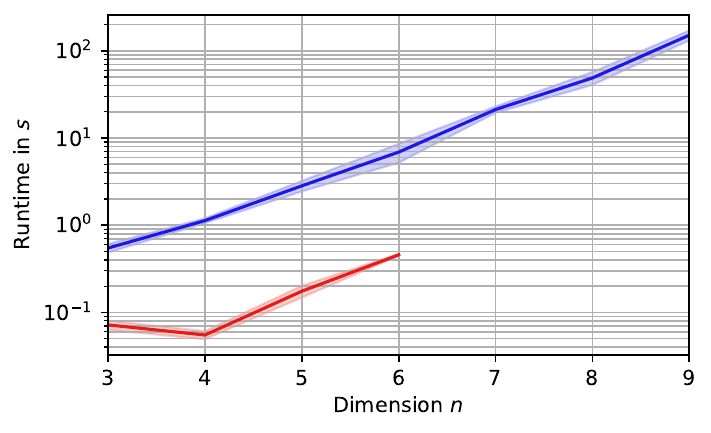}
        \subcaption{$k = \lfloor \frac{n+1}{2} \rfloor$}
    \end{subfigure}
    \caption{Runtimes of the homotopy method (blue) vs. the SDP solver (orange) applied to a random linear functional and $\mathcal{R}(p_k)$ for different dimensions $n$. For the SDP solver, we use the spectrahedral representation of $\mathcal{R}(p_k)$ from  \cite{Br14}. The SDP solver only reaches a conclusion for $n \leqslant 5$ and $n \leqslant 6$, respectively due to the growth of the matrix sizes (see \cref{tab:matrixSize_elementary_symmetric}).}
    \label{fig:symmetric_poly_cone_runtime}
\end{figure}

\cref{fig:symmetric_poly_cone_runtime} shows the runtimes of the homotopy method and the SDP solver for different values of $n$. While for small values of $n$, the SDP solver is much faster than the homotopy method, the SDP formulation is much slower or even intractable for larger values of $n$.

\subsection{Optimizing over the \texorpdfstring{$k$-ellipsoids}{k-ellipsoids}}\label{ssec:kellipsoid}

In the following, we benchmark the problem of optimizing a linear function on a $k$-ellipsoid. Given points $\mathbf{u}_1, \ldots \mathbf{u}_k \in \mathbb{R}^n$, we define the $k$-ellipsoid of dimension $n$ as the convex set
$$\mathcal{E}_n(\mathbf{u}_1, \ldots, \mathbf{u}_k,r) \coloneqq \left\{\mathbf{x} \in \mathbb{R}^n\mid \sum_{i = 1}^k \Vert \mathbf{x} - \mathbf{u}_i \Vert_2 \leqslant r \right\}.$$
The points $\mathbf{u}_1, \ldots, \mathbf{u}_k$ are called the \emph{focal points} of this ellipsoid.

We aim for solving the following problem:
\begin{equation}\label{eq:ellipsoid_optimization}
\begin{split}
    \max_{\mathbf{x} \in \mathbb{R}^n} & \  f(\mathbf{x}) \\
    \text{subject  to } & \ \mathbf{x} \in \mathcal{E}_n(\mathbf{u}_1, \ldots, \mathbf{u}_k,r)
\end{split}
\end{equation}
for different values of $n, k,$ and random choices of $\mathbf{u}_1, \ldots, \mathbf{u}_k$.

In the following, we will benchmark three different methods to solve \eqref{eq:ellipsoid_optimization}:
\begin{enumerate}[label=(\roman*)]
    \item\label{method:homotopy} Applying our homotopy method for a single concave constraint.
    \item\label{method:spec_rep} Using a spectrahedral representation of $\mathcal{E}_n(\mathbf{u}_1, \ldots, \mathbf{u}_k,r)$ to solve \eqref{eq:ellipsoid_optimization} via an SDP.
    \item\label{method:2nd_lift} Lifting the problem to a second-order cone program.
\end{enumerate}

Regarding \ref{method:homotopy}, \eqref{eq:ellipsoid_optimization} can be represented as
\begin{equation}\label{eq:kellipse_homotopy}
\begin{split}
    \max_{\mathbf{x} \in \mathbb{R}^n} \ & f(\mathbf{x})\\
    \text{subject to } & p(\mathbf{x}) \geqslant 0
\end{split}
\end{equation}
where
\begin{equation}
\label{eq:k-ellipse_equation}
    p(\mathbf{x}) \coloneqq r - \sum_{i = 1}^r \Vert \mathbf{x} - \mathbf{u}_i \Vert_2.
\end{equation}
Note that $p$ is not differentiable at the focal points $\mathbf{u}_i$. Even if $\mathbf{u}_i \notin \partial \mathcal{E}_n(\mathbf{u}_1, \ldots, \mathbf{u}_k,r)$, there are times $t$ where the functions $p_t(\mathbf{x})$, constructed from the homotopy in \cref{ssec:singleConcaveConstraint}, might not be differentiable and strictly concave. However, this happens for at most $k$ many different times. Therefore, the numerical integrator evaluates $p_t(\mathbf{x})$ on these problematic points with probability $0$. At all other points and times, the function is concave in our above sense, so the method will work with probability $1$. The homotopy method for a $3$-ellipse is shown in \cref{fig:ellipsoid_b}.

Regarding \ref{method:spec_rep}, in \cite{Ni07} it was shown that ellipsoids are spectrahedra. An explicit defining linear matrix inequality  is 
\begin{equation}\label{eq:kellipse_as_spectrahedron}
     \mathcal{M}(\mathbf{x}) \coloneqq r \cdot I_{(n+1)^k} - N_1(\mathbf{x}) \boxplus N_2(\mathbf{x}) \boxplus \cdots \boxplus N_k(\mathbf{x}) \succcurlyeq 0
\end{equation}
where
$$A_1 \boxplus A_2 \boxplus \cdots \boxplus A_k \coloneqq A_1 \otimes I \otimes \cdots \otimes I + I \otimes A_2 \otimes I \otimes \cdots \otimes I + \ldots + I \otimes \ldots \otimes I \otimes A_k$$
with $I$ being short-hand for the identity matrix of size $n+1$, and
$$N_j(\mathbf{x}) \coloneqq \begin{bmatrix} 0 & u_{j1} - x_1 & \ldots & u_{jn} - x_n \\ u_{j1} - x_1 & & & \\ \vdots & & 0 & \\ u_{jn}-x_n & & &\end{bmatrix}.$$
Note that \eqref{eq:kellipse_as_spectrahedron} is a linear matrix inequality whose matrix size is exponential in the number of focal points $k$. There is also no significantly  more efficient way of defining  the $k$-ellipse by a linear matrix inequality, since the degree of the defining polynomial also increases exponentially with $k$. More precisely, this degree is $2^k$ if $k$ is odd and $2^k - \binom{k}{k/2}$ if $k$ is even \cite{Ni07}.
\cref{fig:ellipsoid_a} shows the zero set of the real zero polynomial $\det(\mathcal{M}(\mathbf{x}))$ for a $3$-ellipse.

\begin{figure}
    \centering
    \begin{subfigure}[t]{0.45\textwidth}
        \centering
        \includegraphics[scale=0.7]{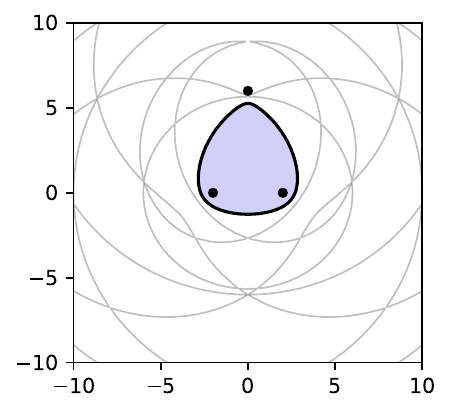}
        \subcaption{The zero set of the real zero polynomial $\det(\mathcal{M}(\mathbf{x}))$. The loop surrounding $\mathbf{0}$ bounds the $3$-ellipse.}
        \label{fig:ellipsoid_a}
    \end{subfigure}\hspace*{\fill}
    \begin{subfigure}[t]{0.45\textwidth}
        \centering
        \includegraphics[scale=0.7]{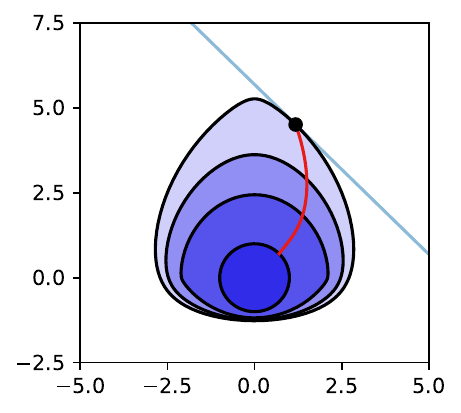}
        \subcaption{The homotopy method applied to the $3$-ellipse and the functional ${f(\mathbf{x}) = x_1 + x_2}$. The figure shows the sets at time $0$, $0.7$, $0.9$ and $1,$ and the path of optimal solutions.}
        \label{fig:ellipsoid_b}
    \end{subfigure}
    \caption{The $3$-ellipse generated by the points $\mathbf{u}_{1,2} = (\pm 2, 0)$ and $\mathbf{u}_3 = (6,0)$ with $r = 12$.}
    \label{fig:my_label}
\end{figure}

Therefore \eqref{eq:ellipsoid_optimization} is equivalent to the SDP
\begin{equation}\label{eq:kellipse_sdp}
\begin{split}
    \max_{\mathbf{x} \in \mathbb{R}^n} \ & f(\mathbf{x})\\
    \text{subject to } &  \mathcal{M}(\mathbf{x}) \succcurlyeq 0.
\end{split}
\end{equation}

Regarding \ref{method:2nd_lift}, \eqref{eq:ellipsoid_optimization} is equivalent to second-order cone program
\begin{equation*}
    \begin{split}
        \max_{\mathbf{x} \in \mathbb{R}^n,\ \mathbf{s} \in \mathbb{R}^k} \ & f(\mathbf{x})\\
        \text{subject to } \ & \sum_{i=1}^{k} s_i \leqslant r\\
        & \Vert \mathbf{x} - \mathbf{u}_i \Vert_2 \leqslant s_i \quad \text{ for all } i \in \{1, \ldots, k\}.
    \end{split}
\end{equation*}

The results of the benchmarks are summarized in \cref{tab:runtim_ellipsoid}. Since the dimension of the linear matrix inequality for the $k$-ellipse increases exponentially with the number of focal points $k$, the optimization problems becomes already intractable for small $k$. In contrast, the homotopy method remains tractable also for choices of large $k$, since the description only changes by adding terms to the defining equation \eqref{eq:k-ellipse_equation}.

\begin{table}
\small
    \centering
    \begin{tabular}{c | r | C{1.3cm} | C{1.3cm} | C{1.3cm} | C{1.3cm} | C{1.3cm} | C{1.3cm}}
        \multicolumn{2}{r|}{} & $k = 2$ & $k = 3$ & $k = 4$ & $k = 5$ & $k = 10$ & $k = 100$\\
        \hline
        \multirow{3}{*}{$n = 2$} & Homotopy & $0.02(1)$ & $0.02(1)$ & $0.06(4)$ & $0.04(1)$ & $0.09(4)$ & $1.5(4)$ \\
        & SDP & $0.03(1)$ & $0.07(1)$ & $3.3(4)$ & $\approx 1500$* & - & - \\
        & $2^{\text{nd}}$ order cone & $0.02(1)$ & $0.02(1)$ & $0.03(1)$ & $0.03(2)$ &$0.04(2)$ & $0.15(3)$ \\
        \hline
        \multirow{3}{*}{$n = 4$} & Homotopy & $0.05(2)$ & $0.05(1)$ & $0.07(3)$ & $0.10(2)$ &$0.21(5)$ & $4(1)$ \\
        & SDP & $0.06(1)$ & $23(2)$ & - & - & - & - \\
        & $2^{\text{nd}}$ order cone & $0.02(1)$ & $0.03(1)$ & $0.03(1)$ & $0.03(1)$ & $0.05(2)$ & $0.22(3)$  
    \end{tabular}
    \caption{Average runtime in seconds (and the standard deviation in brackets) of optimizing a linear function for random $k$-ellipsoids in dimension $n$ with $r = 2k$. The sample size is $100$.\\
    -\ : Solver did not reach a conclusion (Memory error)\\
    *\ : Only a single run.}
    \label{tab:runtim_ellipsoid}
\end{table}
\normalsize

\subsection{Optimizing over a \texorpdfstring{$p$-Norm}{p-Norm} Ball}\label{ssec:pnorm_ball}

In the following, we consider the optimization problem

\begin{equation}
\label{eq:Bp_optimization}
\max_{\mathbf{x} \in B_{p,r}} \  f(\mathbf{x}) \end{equation}
where $B_{p,r}$ is the ball of radius $r > 0$ around $\mathbf{0}$ with respect to the $p$-norm, i.e.
$$ B_{p,r} = \left\{\mathbf{x} \in \mathbb{R}^n\mid s_p(\mathbf{x}) \coloneqq r^p - \sum_{i = 1}^n |x_i|^p \geqslant 0 \right\}.$$
For simplicity, we restrict our analysis to the case $p = 8$.

Since $s_p$ is a convex function on $\mathbb{R}^n$, the homotopy introduced in \cref{ssec:singleConcaveConstraint} is a valid homotopy for this optimization problem. We will compare this method with the standard techniques via lifting \eqref{eq:Bp_optimization} to a semidefinite program or to a second-order cone program.

For the SDP representation, note that $B_{8,r}$ is not a spectrahedron (see \cite[Example 2.29]{Ne23} for a similar argument for $B_{4,r}$). However, it can be represented as the projection of a spectrahedron. This leads to the following SDP which is equivalent to \eqref{eq:Bp_optimization} for $p = 8$.

\begin{align*}
    \max_{\mathbf{x}, \mathbf{w}, \mathbf{u} \in \mathbb{R}^n} & \ f(\mathbf{x}) \\
    \text{subject to } & \ \begin{bmatrix} r^8 & u_1 & \cdots & u_n \\ u_1 & 1 & & \\ \vdots && \ddots & \\ u_n & & & 1\end{bmatrix} \succcurlyeq 0,\\
    & \ \begin{bmatrix} w_1 & x_1 \\ x_1 & 1 \end{bmatrix} \succcurlyeq 0, \ldots, \begin{bmatrix} w_n & x_n \\ x_n & 1 \end{bmatrix} \succcurlyeq 0, \\
    & \ \begin{bmatrix} u_1 & w_1 \\ w_1 & 1 \end{bmatrix} \succcurlyeq 0, \ldots, \begin{bmatrix} u_n & w_n \\ w_n & 1 \end{bmatrix} \succcurlyeq 0.
\end{align*}

Alternatively, $B_{8,r}$ can also be described via a projection of second-order cone constraints, i.e.\ constraints of the form
$$\Vert A \mathbf{x} + \mathbf{b} \Vert_2 \leqslant \mathbf{b}^t \mathbf{x} + d.$$
More precisely, \eqref{eq:Bp_optimization} for $p = 8$ is equivalent to

\begin{align*}
    \max_{\mathbf{x}, \mathbf{w}, \mathbf{u}, \mathbf{s} \in \mathbb{R}^n} & \ f(\mathbf{x}) \\
    \text{subject to } & \sum_{i=1}^n s_i \leqslant r^8 &\\
    & \left\Vert (x_i, w_i - 1)^t \right\Vert_2 \leqslant w_i + 1, &\text{ for } i = 1, \ldots, n \\ 
    &\left\Vert (w_i, u_i - 1)^t \right\Vert_2 \leqslant u_i + 1, &\text{ for } i = 1, \ldots, n \\ 
    &\left\Vert (u_i, s_i - 1)^t \right\Vert_2 \leqslant s_i + 1, &\text{ for } i = 1, \ldots, n.
\end{align*}

Hence, solving \eqref{eq:Bp_optimization} for $p = 8$ in dimension $n$ can be done in the three following ways:
\begin{enumerate}[label=(\roman*)]
  \item Using our homotopy method of \cref{ssec:singleConcaveConstraint}, i.e.\ solving a differential equation in $n$ dimensions.
    \item Solving an SDP in $\mathbb{R}^{3n}$ with matrix size $s = 5n + 1,$
    \item Solving a second-order cone program in $\mathbb{R}^{4n}$.
  
\end{enumerate}

Note that while the homotopy method applies uniformly to all values of $p > 1$, the SDP and second-order cone lifts highly depend on the choice of $p,$ which makes them more complex to implement. In particular, if $p \in \mathbb{Q}$, the dimensions of the lifts grow with the complexity of representing the fraction.

In \cref{fig:B8opt} we present the run-times of these three methods for different dimensions $n$, and a random linear functions. While the homotopy method is slower than the second-order cone lift, it outperforms the SDP lift for every dimension. We expect that the gap between second-order cone programming and the homotopy method shrinks for other values of $p$, in particular those, whose second-order cone lift needs a larger overhead.

\begin{figure}
\centering
    \includegraphics[scale=0.65]{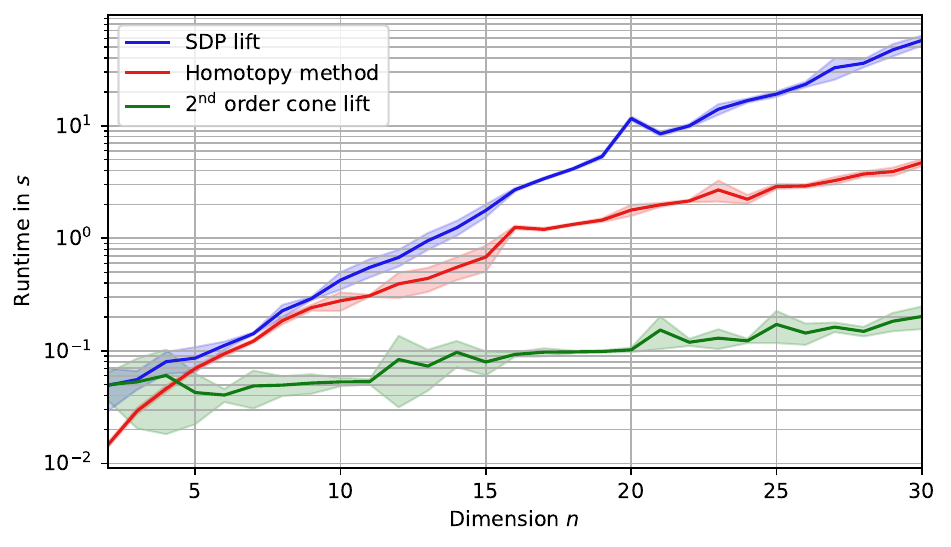}
    \caption{Solving the optimization problem \eqref{eq:Bp_optimization} for $p = 8$ via an SDP lift, the homotopy method, and via a second-order cone lift.}
    \label{fig:B8opt}
\end{figure}

Finally note that every convex semialgebraic set of the form 
$$\{\mathbf{x} \in \mathbb{R}^n\mid p(\mathbf{x}) \geqslant 0 \}$$
is a projected spectrahedron if $p$ is strictly quasi-concave on the set \cite{He07}. However, the matrix size $s$ and the space dimension are in general exponential in $n,$ which gives rise to even a larger gap between the runtimes of the SDP solver and the homotopy method in general.

Yet another alternative way is to compute the solutions of the Lagrange equations corresponding to the problem \eqref{eq:Bp_optimization} for $p = 8$, i.e.\ computing solutions of the system of polynomial equations:
\begin{align*}
    r^8 &= \sum_{i = 1}^{n} x_i^8 \\
    f(\mathbf{e}_i) &= - 8 \lambda x_i^7 \quad \text{ for } \ i = 1, \ldots, n
\end{align*}
According to Bezout's theorem, this system of equations has at most $8^{n+1}$ solutions. So the  established homotopy method from numerical algebraic geometry  has to track an exponential number of solution paths. Therefore, the runtime of the homotopy method for algebraic equations \cite{Ha17} scales exponentially in the dimension $n$. In contrast, our method tracks precisely one path, namely the path of optimal solutions.

\subsection{Geometric Programming with a Single Constraint}\label{ssec:geometricProgramming}

A geometric program is a optimization problem of the following form:
\begin{alignat*}{2}
        \min_{\mathbf{y} \in \mathbb{R}^n} \ & f_0(\mathbf{y}) & \\
        \text{subject to } & f_i(\mathbf{y}) \leqslant 1 & \textrm{ for } i = 1, \ldots, m\\
        & y_j > 0 & \textrm{ for } j = 1, \ldots, n
\end{alignat*}
where $f_0, \ldots, f_m$ are \emph{posynomial} functions, i.e.\
$$ f_i(\mathbf{y}) = \sum_{k=1}^{r} c_{k} y_1^{b_{k1}^{(i)}} \cdots y_n^{b_{kn}^{(i)}}$$
with $c_{k} \geqslant 0, b_{kj}^{(i)} \in \mathbb{R},$ and $r$ is the number of monomials involved in the constraint functions. By substituting $y_j = \exp(x_j)$, the geometric program translates into the convex optimization problem
\begin{equation}
    \begin{split}
        \min_{\mathbf{x} \in \mathbb{R}^n} \ & g_0(\mathbf{x}) \\
        \textrm{subject to } & g_i(\mathbf{x}) \leqslant 1 \textrm{ for } i = 1, \ldots, m
    \end{split}
\end{equation}
where
$$g_i(\mathbf{x}) \coloneqq \sum_{k}^r c_{k} \exp(\mathbf{b}_k^{(i)^t} \mathbf{x}).$$

In the following, we benchmark the subclass of geometric programs with $f_0$  a single monomial and only one constraint, i.e. $m = 1$. This is then equivalent to the problem
\begin{equation}
    \begin{split}
        \max_{\mathbf{x} \in \mathbb{R}^n} \ & f(\mathbf{x}) \\
        \textrm{subject to } & p(\mathbf{x}) \geqslant 0 
    \end{split}
\end{equation}
where $f$ is a linear function and
$$p(\mathbf{x}) = 1 - \sum_{k = 1}^r \exp(\mathbf{b}_k^t \mathbf{x} + a_k)$$
is a concave function. Therefore, this problem can be solved by our method using the homotopy in \cref{ssec:singleConcaveConstraint}.

In the benchmark, we  compare our homotopy method with the interior point method for exponential cones provided by the \textsc{Cvxpy} package (version 1.3.2) in Python for different choices of $r$ and $n$. The plots of the runtimes are shown in \cref{fig:exponential_cone_runtime}. As the plots show, the interior point method performs better than the homotopy method for these examples. However, we believe that the homotopy method can be improved by using more tailored solvers for the differential equation, or a different homotopy which is tailored to geometric programs. 

\begin{figure}
    \centering
    \begin{subfigure}[t]{0.45\textwidth}
    \centering
        \includegraphics[scale=0.47]{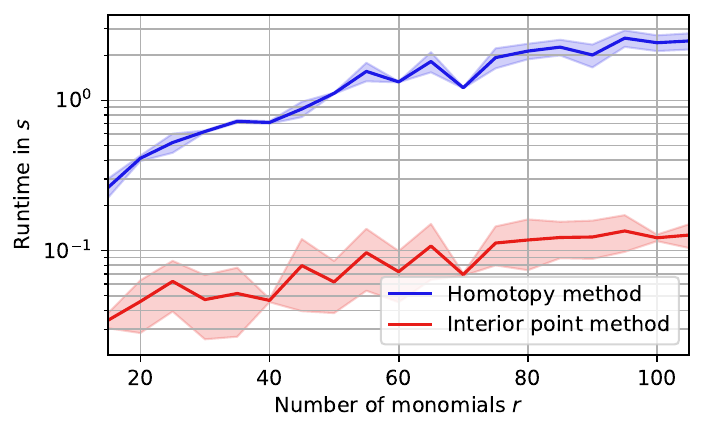}
        \subcaption{Varying the number of monomials $r$ for fixed dimension $n = 4$.}
    \end{subfigure}
    \hspace*{\fill}
    \begin{subfigure}[t]{0.45\textwidth}
    \centering
        \includegraphics[scale=0.47]{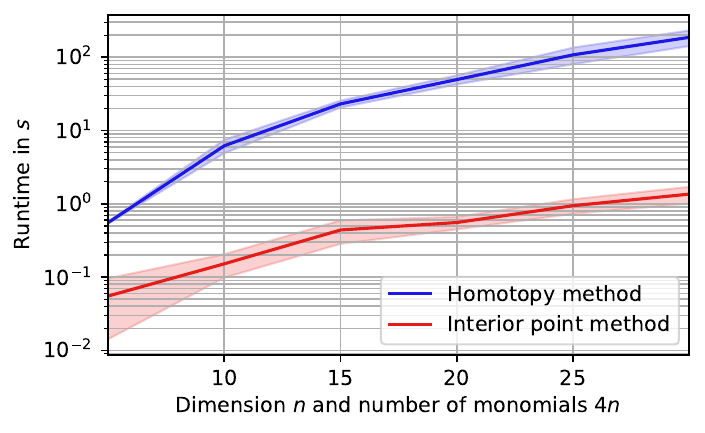}
        \subcaption{Varying the dimension $n$. For the number of monomials we choose $r = 4n$ to guarantee that the feasable set is bounded with high probability.}
    \end{subfigure}
    \caption{Runtimes of the homotopy method vs. the interior point method applied to random geometric programs. For simplicity, we choose $a_k = 0$ and $\mathbf{b}_k$ to be random vectors with entries in $[-1,1]$. Moreover, we choose $C \in [r, 2r]$ randomly to guarantee that $0$ is in the set of feasable points. For each configuration we have a sample size of $10$. The line shows the mean runtime together with the standard deviation.}
    \label{fig:exponential_cone_runtime}
\end{figure}






\section{Conclusion and Open Questions}
\label{sec:outlook}

In this paper, we have introduced a method to solve certain classes of convex optimization problems, by introducing a homotopy between a trivial optimization problem and the target problem, and following the path of optimal solutions along the homotopy of optimization problems. In \cref{sec:diff_eq} we have shown that the path of optimal solutions is described by the  non-linear differential equation \eqref{eq:diff_eq}, which has a unique solution under certain assumptions on the homotopy (\cref{prop:uniqueness}).
We have shown that this approach can be applied to examples including semidefinite  and hyperbolic programs (\cref{ssec:homotopy_RZ}) as well as convex optimization problems with a single constraint (\cref{ssec:singleConcaveConstraint}). Moreover, we have shown that our method leads to a significant speed-up in certain numerical examples, most notably hyperbolic programming (\cref{ssec:hyperbolicProgramming}).

The general applicability of this method begs the question under which assumptions we can also solve more general convex optimization problems with the homotopic approach. This includes for example:
\begin{itemize}
    \item solving convex optimization problems with multiple constraints, or
    \item solving optimization problems with quasi-convex constraints.
\end{itemize}

It also remains unclear if the boundedness requirement of the feasible sets $C_t$ can be relaxed. While boundedness limits hyperbolic programming only to affine slices of hyperbolicity cones, it is crucial for preventing solution divergence, even though smoothing also works in the unbounded case.

Moreover, we leave an exact runtime analysis of this method for further work. This would include for example a comparison of the complexity of our method with interior-point methods applied to SDPs. Finally, since there are multiple canonical choices of homotopies for certain optimization classes 
this suggests to study whether there exist alternative choices of homotopies that lead to computationally more efficient results.


\section*{Acknowledgments}

This research was funded in part by the Austrian Science Fund (FWF) [doi:\href{https://www.doi.org/10.55776/P33122}{10.55776/P33122}]. AK further acknowledges funding of the Austrian Academy of Sciences (\"OAW) through the DOC scholarship 26547. Moreover, AK wants to thank Hamza Fawzi,  and Ois\'{i}n Faust for fruitful discussions. 

\bibliographystyle{abbrvnat}
\bibliography{references}

\appendix
\addtocontents{toc}{\protect\setcounter{tocdepth}{0}}

\section{Boundedness of the Rigidly Convex Sets of Symmetric Polynomials}
\label{app:boundedness_sk}

In the following, we prove that $\mathcal{R}(p_k)$ (see \eqref{eq:def_pk}) is bounded for $k \geqslant 2$.

\begin{proposition}
$\mathcal{R}(p_k)$ is bounded for $k \geqslant 2$.
\end{proposition}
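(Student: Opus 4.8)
The plan is to pass from $\mathcal{R}(p_k)$ to the affine slice of the hyperbolicity cone, then reduce the whole family to the single case $k=2$ by monotonicity of the cones, and finally settle $k=2$ by an explicit quadratic computation. Write $\mathbf{e}=(1,\dots,1)\in\mathbb{R}^{n+1}$, $L=\{\mathbf{z}\in\mathbb{R}^{n+1}\mid\mathbf{e}^{t}\mathbf{z}=0\}$ and $A=\mathbf{e}+L=\{\mathbf{y}\mid\mathbf{e}^{t}\mathbf{y}=n+1\}$. By \eqref{eq:affinceSlice}, $\mathcal{R}(p_k)$ is isometric to $\mathcal{C}_{\mathbf{e}}(s_k)\cap A$, so it suffices to show the latter is bounded. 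Furthermore, the chain $\mathcal{C}_{\mathbf{e}}(s_k)\subseteq\dots\subseteq\mathcal{C}_{\mathbf{e}}(s_2)$ recalled in \cref{ssec:hyperbolicProgramming} gives $\mathcal{C}_{\mathbf{e}}(s_k)\cap A\subseteq\mathcal{C}_{\mathbf{e}}(s_2)\cap A$ for all $k\geqslant 2$, so it is enough to bound $\mathcal{C}_{\mathbf{e}}(s_2)\cap A$.

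For the last step I would use the identity $s_2(\mathbf{w})=\tfrac12\big(s_1(\mathbf{w})^2-\Vert\mathbf{w}\Vert^2\big)$. Writing $\mathbf{y}=\mathbf{e}+\mathbf{z}$ with $\mathbf{z}\in L$, one has $\mathbf{y}-t\mathbf{e}=(1-t)\mathbf{e}+\mathbf{z}$, hence $s_1(\mathbf{y}-t\mathbf{e})=(1-t)(n+1)$ and, since $\mathbf{e}^{t}\mathbf{z}=0$, $\Vert\mathbf{y}-t\mathbf{e}\Vert^{2}=(1-t)^{2}(n+1)+\Vert\mathbf{z}\Vert^{2}$. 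Therefore
\[
    s_2(\mathbf{y}-t\mathbf{e})=\tfrac12\Big((1-t)^2(n+1)^2-(1-t)^2(n+1)-\Vert\mathbf{z}\Vert^2\Big)=\tfrac12\Big(n(n+1)(1-t)^2-\Vert\mathbf{z}\Vert^2\Big),
\]
which (as $n\geqslant 2$) is a genuine quadratic in $t$ with roots $t=1\pm\Vert\mathbf{z}\Vert/\sqrt{n(n+1)}$. Since $\mathbf{y}\in\mathcal{C}_{\mathbf{e}}(s_2)$ forces every root of $t\mapsto s_2(\mathbf{y}-t\mathbf{e})$ to be nonnegative, the smaller root must be nonnegative, giving $\Vert\mathbf{z}\Vert\leqslant\sqrt{n(n+1)}$. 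Hence $\mathcal{C}_{\mathbf{e}}(s_2)\cap A$ lies in the ball of radius $\sqrt{n(n+1)}$ about $\mathbf{e}$ inside $A$, so it is bounded, and the claim follows.

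I do not expect a real obstacle: the content is already contained in \eqref{eq:affinceSlice} and the monotonicity of hyperbolicity cones, after which only the elementary quadratic computation for $s_2$ remains. The one point that deserves a line of care is that $s_2(\mathbf{y}-t\mathbf{e})$ is nondegenerate of degree exactly $2$ --- its leading coefficient is $\tfrac12 n(n+1)$, which is nonzero precisely because $k\geqslant 2$ forces $n\geqslant 2$ --- so that it genuinely has two roots and the step ``all roots nonnegative $\Rightarrow$ smaller root $\geqslant 0$'' applies.
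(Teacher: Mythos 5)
Your proposal is correct and follows essentially the same route as the paper: reduce to $k=2$ via the nesting of the hyperbolicity cones (equivalently $\mathcal{R}(p_{k+1})\subseteq\mathcal{R}(p_k)$) and then settle $k=2$ by the explicit quadratic coming from $s_2=\tfrac12(s_1^2-\Vert\cdot\Vert^2)$. The only cosmetic difference is that you work in the homogeneous picture and extract the explicit radius bound $\Vert\mathbf{z}\Vert\leqslant\sqrt{n(n+1)}$, whereas the paper dehomogenizes and observes that the two roots of $\lambda\mapsto p_2(\lambda\mathbf{a})$ have opposite signs.
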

\begin{proof}
Since $\mathcal{R}(p_k)$ is an affine slice of the hyperbolicity cones $\mathcal{C}_{\mathbf{e}}(s_k)$ (see \eqref{eq:affinceSlice}), we have that
$$\mathcal{R}(p_{k+1}) \subseteq \mathcal{R}(p_k),$$
by \cite[Remark 2.75]{Ne23}.

Therefore it suffices to show that $\mathcal{R}(p_2)$ is bounded. We show that for every $\mathbf 0\neq \mathbf{a} \in \mathbb{R}^n$, the polynomial $\lambda \mapsto p_2(\lambda \mathbf{a})$ has roots with different sign.
Elementary manipulations lead to 
$$p_2(\mathbf{x}) = \frac{n (n+1)}{2} - \sum_{1\leqslant i\leqslant j\leqslant n}^{n} x_ix_j.$$ 
Hence, for every $\mathbf{a} \in \mathbb{R}^{n}$, we have that
$$p_2(\lambda \cdot \mathbf{a}) = \frac{n (n+1)}{2} - b \lambda^2 $$
with
$$ b \coloneqq \sum_{1\leqslant i\leqslant j\leqslant n} a_ia_j > 0 .$$
Thus the two roots must have different sign, which  implies that $\mathcal{R}(p_2)$ is bounded.
\end{proof}

\end{document}